\newtheorem{thm}{Theorem}[section]
\newtheorem{cor}[thm]{Corollary}
\newtheorem{lem}[thm]{Lemma}
\newtheorem{prop}[thm]{Proposition}
\theoremstyle{definition}
\newtheorem{defn}[thm]{Definition}
\newtheorem{ass}[thm]{Assumption}
\theoremstyle{remark}
\newtheorem{rem}[thm]{Remark}
\numberwithin{equation}{section}
\newcommand{\F}{\mathcal{F}}
\newcommand{\C}{\mathcal{C}}
\renewcommand{\P}{\mathbb{P}}
\newcommand{\E}{\mathbb{E}}
\newcommand{\R}{\mathbb{R}}
\renewcommand{\L}{\mathcal{L}}
\newcommand{\T}{\mathcal{T}}
\newcommand{\basis}{(\Omega,  \, (\F_t)_{t \in \R_+}, \, \P)}
\title[Regularity of the optimal stopping problems]{Regularity of the optimal stopping problem for jump diffusions}
\author{Erhan Bayraktar}
\address[Erhan Bayraktar]{Department of Mathematics, University of Michigan, Ann Arbor, MI 48109, USA; e-mail:erhan@umich.edu}
\thanks{The first author is supported in part by the National Science Foundation under an applied mathematics research grant and a Career grant, DMS-0906257 and DMS-0955463, respectively, and in part by the Susan M. Smith Professorship. The second author is supported in part by STICERD at London School of Economics}
\author{Hao Xing }
\address[Hao Xing]{Department of Statistics, London School of Economics, London, WC2A 2AE, UK; e-mail:h.xing@lse.ac.uk}
\keywords{Optimal stopping, variational inequality, L\'{e}vy processes, regularity of the value function, smooth fit principle, Sobolev spaces}
\begin{document}
\maketitle

\begin{abstract}
 The value function of an optimal stopping problem for jump diffusions is known to be a generalized solution of a variational
 inequality. Assuming that the diffusion component of the process is nondegenerate and a mild assumption on the singularity of the L\'{e}vy measure, this paper shows that the value function of this optimal stopping problem on an unbounded domain with finite/infinite variation jumps is in $W^{2,1}_{p, loc}$ with $p\in(1, \infty)$. As a consequence, the smooth-fit property holds.
\end{abstract}

\section{Introduction}\label{section:introduction}
On a probability space $\basis$, consider a one-dimensional jump diffusion process $X= \{X_t; t\geq 0\}$ whose dynamics is governed by the following stochastic differential equation:
\begin{equation}\label{eq:sde-x}
 d X_t = b(X_{t},t)\, dt + \sigma (X_{t},t)\, dW_t + \int_{\R} h(X_{t-}, y, t) \left(N(dt, dy) - 1_{\{|y|\leq 1\}} dt \,\nu(dy)\right),
\end{equation}
in which $W= \{W_t; t\geq 0\}$ is a $1$-dimensional Wiener process, $N$,
independent of the Wiener process, is a Poisson random measure on $\R_+ \times \R$ with its mean measure $dt \times \nu(dy)$, and $\nu$ is a L\'{e}vy measure on $\R$. The coexistence of diffusions and infinite activity jumps is motivated by recent studies of A\"{i}t-Sahalia and Jacod in \cite{ait-jacod} and \cite{ait-jacod-Brownian}.

This paper studies the problem of maximizing the discounted terminal
reward $g$ by optimally stopping the process $X$ before a fixed
time horizon $T$. The value function of this problem is defined as
\begin{equation}\label{eq:optimal-stopping}\tag{OS}
 u(x,t) = \sup_{\tau \in \T_{t,T}} \E^{t,x}\left[e^{-(\tau-t)r(X_t)} g(X_{\tau})\right],
\end{equation}
in which $\T_{t,T}$ is the set of all stopping times valued between $t$ and $T$. A specific example of such an optimal stopping problem is the American option pricing problem, where $X$ models the logarithm of the stock price process and $g$ represents the pay-off function.

The value function $u$ is expected to satisfy a variational inequality with a nonlocal integral term (see e.g. Chapter 3 of \cite{Bensoussan-Lions}).
Different concepts of solutions were employed to characterize the value function: Pham used the notion of viscosity solution in \cite{pham-control}. Also see \cite{MR2330346}, \cite{MR2422079} for more recent results in this direction. Lamberton and Mikou worked with L\'{e}vy processes and showed in \cite{lamberton} that the value function can be understood in the distributional sense.

When the diffusion component in $X$ is nondegenerate, the value function is expected to have higher degree of regularity. Sections 1-3 in Chapter 3 of \cite{Bensoussan-Lions} and \cite{garr-mena} analyzed the Cauchy problems for second order partial integro-differential equations and showed the existence and uniqueness of solutions in both Sobolev and H\"{o}lder
spaces. Also see \cite{MR680451}. The intuition is that the diffusions component dominates the contribution from jumps in determining the regularity of solutions, no matter whether jumps have finite variation or not. However this intuition is only a folklore theorem for obstacle problems. There are some limited results available whose assumptions on obstacles, domains, and the structure of the jumps may not be appropriate for financial applications. For example, Bensoussan and Lions analyzed an obstacle problem for jump diffusions where jumps may have finite/infinite activity with finite/infinite variation; see in Theorem 3.2 in \cite{Bensoussan-Lions} on pp. 234. However, their assumption on the obstacle may not be satisfied by option payoffs. In the mathematical finance literature, when irregular obstacles are considered, the jumps are usually restricted to finite activity or infinite activity with finite variation cases. Zhang studied in \cite{zhang} an obstacle problem for a jump diffusion with finite active jumps. Also see \cite{pham}, \cite{yang}, \cite{bayraktar-finite-horizon}, and \cite{bayraktar-xing-SIMA} for further developments. More recently, Davis et al. in \cite{guoliang10}, generalizing the results in \cite{MR2486085} for the diffusion case, analyzed an impulse control problems for jump diffusions with infinite activity but finite variation jumps. A regularity result which treats obstacle problems with irregular obstacles and infinite variation jumps has been missing in the literature.

In this paper, we allow for infinite activity, infinite variation jumps. We show in Theorem~\ref{thm: main} that the value function of an obstacle problem solves a variational inequality for almost all points in the domain, and that it is an element in $W^{2,1}_{p, loc}$  with $p\in(1, \infty)$ (see later this section for the definition of this Sobolev space). This regularity result directly implies that the smooth fit property holds and the value function is $C^{2,1}$ inside the continuation region. These results confirm the intuition that the nondegenerate diffusions components dominate any type of L\'{e}vy jumps in determining the regularity of the value function for obstacle problems. We also develop a non-local version of the interior Schauder estimate in Proposition~\ref{prop:W^21_p-est}, which could be useful to study other integro-differential equations with irregular initial conditions.

The remainder of the paper is organized as follows. After introducing notation at the end of this section, main results are presented in Section~\ref{sec:main results}. Regularity properties of the infinitesimal generator of $X$ are analyzed in Section~\ref{sec: reg prop}. Then main results are proved in Section~\ref{sec:proofs}.

\subsection{Notation}\label{sec:notation}
For a given open interval $D= (\ell, r)$ with $-\infty \leq \ell < r \leq\infty$, let us define the $\delta$-neighborhood of $D$ as $D^\delta := (\ell-\delta, r+\delta)$ for $\delta>0$. We will also denote $D_s := D \times (0,s)$, $D^\delta_s := D^\delta \times (0,s)$ for any $s>0$, $E_s:= \R \times [0, s]$, and by $\overline{A}$ the closure of the indicated set $A$. Let us recall definitions of Sobolev spaces and H\"{o}lder spaces in what follows; see \cite{lad} pp. 5-7 for further details.

\begin{defn}\label{def:sobolev_holder space}
$C^{2,1}(D_s)$ denotes the class of continuous functions on $D_s$ with continuous
classical time and spatial derivatives up to the first and second order respectively.

For any positive integer $p\geq 1$, $W^{2,1}_{p}(D_s)$ is the space of functions $v \in L_p(D_s)$ with generalized derivatives $\partial_t v$,
$\partial_{x} v$, $\partial^2_{xx} v$, and a finite norm
$\|v\|_{W^{2,1}_p(D_s)}:= \|\partial_t v\|_{L_p(D_s)} + \|\partial_{x} v\|_{L_p(D_s)} + \|\partial^2_{xx} v\|_{L_p(D_s)}$.
The space $W^{2,1}_{p, \, loc}(D_s)$ consists of functions whose $W^{2,1}_p$-norm is finite on any compact subsets of $D_s$.

For any positive nonintegral real number $\alpha$,
$H^{\alpha,\alpha/2}\left(\overline{D_s}\right)$ is the
space of functions $v$ that are continuous in $\overline{D_s}$ with continuous classical derivatives
$\partial_t^r\partial_x^s v$ for $2r+s<\alpha$, and have finite norm
$\|v\|^{(\alpha)}_{\overline{D_s}} := |v|_x^{(\alpha)} + |v|_t^{(\alpha/2)} + \sum_{2r+s\leq [\alpha]} \|\partial_t^r\partial_x^s
 v\|^{(0)}$, in which $\|v\|^{(0)} = max_{D_s} |v|$, $|v|_x^{(\alpha)} = \sum_{2r+s=[\alpha]} \sup_{|x-x'|\leq \rho_0}
\frac{|\partial_t^r\partial_x^s v(x,t)-\partial_t^r\partial_x^s v(x',t)|}{|x-x'|^{\alpha-[\alpha]}}$, and $|v|_t^{(\alpha/2)} = \sum_{\alpha-2<2r+s<\alpha} \sup_{|t-t'|\leq \rho_0}
\frac{|\partial_t^r\partial_x^s v(x,t)-\partial_t^r\partial_x^s v(x,t')|}{|t-t'|^{(\alpha-2r-s)/2}}$, for a constant $\rho_0$.
The space $H^{\alpha}\left(\overline{\Omega}\right)$ is the H\"{o}lder space when only the spatial variable is considered.
\end{defn}

\section{Main results}\label{sec:main results}
\subsection{Model}
Let us first specify the jump diffusion $X$ in \eqref{eq:sde-x}.
We assume that the drift and the volatility of $X$, the discounting factor $r$, and the jump size $h$ satisfy the following set of assumptions:
\begin{ass}\label{ass: coeff}
Let $a:= \frac12 \sigma^2$. Coefficients $a, b, r \in H^{\ell, \frac{\ell}{2}}(E_T)$ for some $\ell>1$, $r(x,t)\geq 0$. Moreover, there exist a strictly positive constant $\lambda$ such that $a(x,t)\geq \lambda$  for all $(x,t)\in E_T$. The jump size $h(x, y, t)$ is continuously differentiable in $x$ and $\partial_x h(x,y,t)$ is H\"{o}lder continuous in $(x,t)$, moreover there exists a constant $C$ such that
\begin{equation}\label{ass: h}
|h(x,y,t)| \leq C|y| \quad \text{ and } |h(x_1, y, t_1) - h(x_2, y, t_2)| \leq C|y|\,(|x_1-x_2| + |t_1-t_2|^{\frac12}),
\end{equation}
for $(x,t), (x_i, t_i)\in \R \times [0,T]$, $i=1$ or $2$, and $y\in \R$.
\end{ass}

Without loss of generality, we will take $C$ in \eqref{ass: h} to be equal to one, otherwise we would rescale the process $X$.
For the pure jump component in \eqref{eq:sde-x}, we assume that $\nu$ is a L\'{e}vy measure on $\R$. See \cite{sato} for this terminology. In particular, we require that $\int_{\R} (y^2 \wedge 1) \,\nu(dy) <\infty$. When $h\equiv y$, the jump component of \eqref{eq:sde-x} is a L\'{e}vy process.
The aforementioned assumptions on coefficients and the jump component ensure that \eqref{eq:sde-x} admits a unique strong solution (see \cite{Gihman-Skorohod}), which we denote by $X$. This jump diffusion process $X$ is said to have \emph{finite activity}, if $\nu$ is a finite measure on $\R$, otherwise it is said to have \emph{infinite activity}. We say  that the jumps of $X$ have \emph{finite variation}, if $\int_\R |y| \nu(dy) <\infty$, otherwise we say that they have \emph{infinite variation}.

Among all possible L\'{e}vy measures, we consider the following large subclass in this paper:
\begin{ass}\label{ass: Levy measure}
 The L\'{e}vy measure satisfies $\int_{|y|>1} |y|^2 \nu(dy)<\infty$. Moreover it has a density, which we denote by $\rho$, and this density satisfies $\rho(y)\leq \frac{M}{|y|^{1+\alpha}}$ on $|y|\leq 1$, for some constants $M>0$ and $\alpha\in [0,2)$.
\end{ass}
Note that the interval $|y|\leq 1$ can be replaced by any other neighborhood of $0$ in our analysis, our choice of this interval is made for notational convenience.
\begin{rem}\label{remark:mean-measure-sing}
 Virtually all L\'{e}vy processes used in the financial modeling satisfy above assumption.
For jump diffusions models, $\nu$ is a finite measure as in Merton's and Kou's model. For normal tempered stable processes, $\rho$ has a power singularity $1/|y|^{1+2\beta}$ at $y=0$, with $0\leq \beta <1$; see (4.25) in \cite{cont}. In particular, this class contains Variance Gamma and Normal Inverse Gaussian where $\beta = 0$ or $1/2$ respectively. For generalized tempered stable processes (see Remark 4.1 in \cite{cont}),
$
 \rho(y) = \frac{C_-}{|y|^{1+\alpha_-}} e^{-\lambda_- |y|} 1_{\{y<0\}} + \frac{C_+}{|y|^{1+\alpha_+}} e^{-\lambda_+ y} 1_{\{y>0\}},
$
with $\alpha_-, \alpha_+ <2$ and $\lambda_-, \lambda_+ >0$. In particular, CGMY processes in \cite{cgmy} and regular L\'{e}vy processes of exponential type (RLPE) in \cite{boy-leven} are special examples of this class.
\end{rem}

Having introduced the jump diffusion process $X$, let us discuss the problem \eqref{eq:optimal-stopping}. We assume that the payoff function $g$ satisfies the following set of assumptions:
\begin{ass}\label{ass: payoff}
The payoff function $g$ is positive, bounded and Lipschitz continuous on $\R$. That is, there exists positive constants $K$ and $L$ such that $0\leq g(x)\leq K$ for any $x\in \R$ and $|g(x)-g(y)|\leq L|x-y|$ for any $x,y\in \R$. Moreover $g$ satisfies $\partial^2_{xx}g \geq -J$ for some positive constant $J$ in the distributional sense, i.e., $\int_{\R} g(x) \partial^2_{xx} \phi(x) \, dx \geq -J \int_{\R} \phi(x) \, dx$ for any compactly supported smooth function $\phi$ on $\R$.
\end{ass}
A typical example, where these assumptions holds, is the American put option payoff $g(x) = (K-e^x)_+$ for some $K\in \R_+$.

For the problem \eqref{eq:optimal-stopping}, we define its
continuation region $\mathcal{C}$ and stopping region
$\mathcal{D}$ as usual:
\begin{equation*}
 \mathcal{C} := \left\{(x,t)\in \R^n\times [0,T) : u(x,t) >
 g(x)\right\} \quad \text{ and } \quad \mathcal{D} :=
 \left\{(x,t)\in \R^n\times [0,T) : u(x,t) = g(x) \right\}.
\end{equation*}

\subsection{Main regularity results}
Intuitively, one can expect from It\^{o}'s formula that the value function $u$
satisfies the following \emph{variational inequality}:
\begin{equation}\label{eq:varineq-u}
\begin{split}
 &\min\left\{(-\partial_t  - \L + r)\, u, u -g \right\} = 0, \quad (x,t)\in \R \times [0,T),\\
 &u(x,T) = g(x), \hspace{3.3cm} x\in \R.
\end{split}
\end{equation}
Here, the integro-differential operator $\L$ is the infinitesimal
generator of $X$. Its application on a smooth test function $\phi$ is
\begin{equation}
  \L\phi := \L_D \phi + I \phi,   \label{eq:def-L}
\end{equation}
where $\L_D \phi(x,t) := a(x,t) \,\partial^2_{xx} + b(x,t)\,\partial_x$ and the integral term
\begin{equation}\label{eq:def-I}
 I \phi(x,t) := \int_{\R} \left[\phi(x+h(x,y,t),t) - \phi(x,t) - h(x,y,t) \, \partial_x \phi(x,t) \, 1_{\{|y|\leq 1\}} \right]\nu(dy).
\end{equation}
In what follows we will not write down the arguments of $h$ explicitly or only indicate the argument that we are focusing in order to keep the notation simple.

In general, one does not know a priori whether $u$ is sufficiently smooth so that it solves \eqref{eq:varineq-u} in the classical sense. Moreover, it is not even clear whether $Iu$ is well defined in the classical sense. When $\phi(\cdot, t)$ is Lipschitz continuous on $\R$ with a Lipschitz continuous derivative $\partial_x \phi(\cdot, t)$ in a neighborhood of $x$, it can be shown that $I\phi(x,t)$ is well defined in the classical sense. Indeed,
$
 I\phi(x,t) = I_{\epsilon} \phi(x,t) + I^{\epsilon} \phi(x,t)<\infty,
$ where
\begin{eqnarray}
 I^{\epsilon} \phi(x,t) &:=& \int_{|y|>\epsilon}\left[\phi(x+h,t) -
 \phi(x,t)\right]\nu(dy) - \partial_x \phi(x,t)  \int_{\epsilon < |y| \leq 1} h\, \nu(dy) \nonumber \\
 & \leq & C \int_{|y|>\epsilon} |y| \,\nu(dy) + |\partial_x \phi(x,t)| \int_{\epsilon < |y| \leq 1} |y|\,
 \nu(dy), \nonumber\\
 I_{\epsilon} \phi(x,t) &:=& \int_{|y|\leq \epsilon} \left[\phi(x+h,t) - \phi(x,t) - h \,\partial_x \phi(x,t)\right]
 \nu(dy) \label{eq:Isubeps}\\
 &=& \int_{|y|\leq \epsilon} h  \left(\partial_{x} \phi(z,t) - \partial_{x}
 \phi(x,t)\right) \nu(dy)
 \leq  C \int_{|y|\leq \epsilon}  \, y^2 \nu(dy) \nonumber.
\end{eqnarray}
Here, the first inequality follows from the Lipschitz continuity of  $\phi(\cdot, t)$ and the assumption that $|h|\leq |y|$; the mean value theorem implies the second equality in \eqref{eq:Isubeps} where $z$ satisfies $|z-x|<|h|$; the last inequality holds due to the Lipschitz continuity of  $\partial_x \phi(\cdot, t)$ in an $\epsilon$-neighborhood of $x$. However, the value function $u$, in general, does not have these regularity properties mentioned above. We only know from Lemma~3.1 in \cite{pham-control} that $u$ is Lipschitz continuous in $x$ and $1/2-$H\"{o}lder continuous in $t$.
Nevertheless, we will see that the integral term $Iu$ is well defined in the classical sense in Lemma~\ref{lemma:infinite-var-holder} below. In fact, more is true as we show in the next theorem, which is the main result of the paper.

\begin{thm}\label{thm: main}
 Let Assumptions~\ref{ass: coeff}, \ref{ass: Levy measure}, and  \ref{ass: payoff} hold. Then $u\in W^{2,1}_{p, loc}(\R \times (0,T))$ for any integer $p\in (1, \infty)$. Moreover, $u$ solves \eqref{eq:varineq-u} for almost every point in $E_T$.
\end{thm}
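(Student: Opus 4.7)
The plan is to view \eqref{eq:varineq-u} as a local parabolic obstacle problem whose right-hand side is the nonlocal term $Iu$, and to combine classical obstacle regularity for the diffusion part with the non-local interior $W^{2,1}_p$ estimate of Proposition~\ref{prop:W^21_p-est}. The guiding philosophy is that the non-degeneracy of $a$ provides enough parabolic smoothing to absorb any L\'evy jump contribution, provided a rudimentary one-sided second-derivative bound on $u$ is available; such a bound is what the semi-concavity assumption $\partial^2_{xx} g \geq -J$ is designed to deliver.

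\textbf{Key steps.} First, I collect the a priori regularity of $u$ from \cite{pham-control}: $u$ is bounded, Lipschitz in $x$ with constant $L$, and $1/2$-H\"older in $t$. Via the dynamic programming principle and Assumption~\ref{ass: payoff}, I transfer $\partial^2_{xx} g \geq -J$ into a distributional bound $\partial^2_{xx} u \geq -J'$, making $Iu$ a well-defined locally bounded function; this is precisely the content of Lemma~\ref{lemma:infinite-var-holder}. Second, I regularize the problem by mollifying the obstacle $g \to g_\delta$ and truncating the L\'evy measure $\nu \to 1_{\{|y|>\epsilon\}}\nu$; on a bounded cylinder with Dirichlet-type data extracted from $u$, the resulting finite-activity, smooth-obstacle problem admits a unique solution $u^{\delta,\epsilon}\in W^{2,1}_{p,loc}$ by classical results such as Chapter~3 of \cite{Bensoussan-Lions}. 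Third, I invoke Proposition~\ref{prop:W^21_p-est} to obtain, for every pair of cylinders $D' \subset\subset D \subset\subset \R\times(0,T)$,
$$ \|u^{\delta,\epsilon}\|_{W^{2,1}_p(D')} \leq C\bigl(\|u^{\delta,\epsilon}\|_{L^\infty(D)} + \|g_\delta\|_{W^{2,1}_p(D)} + 1\bigr) $$
with $C$ independent of $(\delta,\epsilon)$. Fourth, stability of the optimal stopping value under Lipschitz approximation of the payoff and under truncation of the small jumps gives $u^{\delta,\epsilon} \to u$ locally uniformly; combined with the uniform estimate above, weak compactness in $W^{2,1}_p$ yields $u \in W^{2,1}_{p,loc}(\R\times(0,T))$. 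Finally, passing to the limit in the variational inequalities satisfied by $u^{\delta,\epsilon}$ and using the newly acquired $W^{2,1}_p$ regularity to identify $Iu$ as the pointwise a.e.\ limit of $I u^{\delta,\epsilon}$ gives \eqref{eq:varineq-u} almost everywhere in $E_T$.

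\textbf{Main obstacle.} The crux is the third step: making the non-local $W^{2,1}_p$ estimate uniform in $\epsilon$ in the infinite-variation regime $\alpha \in [1,2)$, where the small-jump part $I_\eta u$ cannot be controlled by the Lipschitz norm of $u$ alone but genuinely requires a second-order bound. Since a priori $\partial^2_{xx} u$ is only controlled in the $L^p$ sense---not pointwise---the remedy is to split $Iu = I_\eta u + I^\eta u$, to use the one-sided semi-concavity $\partial^2_{xx} u \geq -J'$ of step~(i) for a one-sided pointwise control of $I_\eta u$ against $\int_{|y|\leq\eta} y^2\,\nu(dy)$, and to treat the bounded-range part $I^\eta u$ as an $L^\infty$ source via the Lipschitz bound on $u$. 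Proposition~\ref{prop:W^21_p-est} then closes the loop by converting this split into the desired $W^{2,1}_p$ bound, and the entire argument becomes robust under $\epsilon\to 0$. The interplay between the distributional semi-concavity inherited from $g$ and the non-local interior estimate is thus the mechanism by which infinite-variation jumps are brought under the same roof as finite-activity ones.
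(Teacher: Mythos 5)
Your high-level strategy---regularize, invoke Proposition~\ref{prop:W^21_p-est} uniformly, pass to the weak limit---is aligned with the paper's philosophy, but several of the load-bearing steps are either not established or are applied outside their range of validity.

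First, Proposition~\ref{prop:W^21_p-est} is a statement about $W^{2,1}_{p,loc}$-solutions of a \emph{Cauchy problem} $(\partial_t-\L_D-I+r)v=f$, with a given $L_p$-driving term $f$. Your approximations $u^{\delta,\epsilon}$ are solutions of \emph{obstacle problems}, and Proposition~\ref{prop:W^21_p-est} does not apply to them as stated: there is no equation with a controllable right-hand side to which to attach the Calderon--Zygmund iteration. This is precisely why the paper introduces the penalty approximation \eqref{eq:penalty-v-eps}: the penalized problem is a genuine Cauchy PDE with driving term $f=-p_\epsilon(v^\epsilon-g^\epsilon)$, and the entire machinery (Schauder fixed point for existence, Lemma~\ref{lemma:maximum-principle} for maximum principle, Lemmas~\ref{lemma:v-eps-bounded}--\ref{lemma:u-geq-g} and Corollary~\ref{cor:penalty-bounded} for uniform $L^\infty$, gradient, and driving-term bounds) exists to put $v^\epsilon$ exactly into the setting of Proposition~\ref{prop:W^21_p-est} with constants independent of $\epsilon$. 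Your scheme simply skips all of this; it is not a shortcut but a gap.

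Second, the claim that the dynamic programming principle propagates $\partial^2_{xx}g\geq -J$ to a distributional bound $\partial^2_{xx}u\geq -J'$ is asserted, not proved, and it is not trivial for a jump diffusion with state-dependent coefficients. The paper never proves such a bound on $u$; the semi-concavity of $g$ is used only on the \emph{obstacle} (in the estimate on $Ig^\epsilon$ in Lemma~\ref{lemma:u-geq-g} and in fixing $p_\epsilon(0)$), not transferred to the value function. Moreover, it is not ``precisely the content of Lemma~\ref{lemma:infinite-var-holder}'': that lemma makes $I\phi$ well defined under the hypothesis $\phi\in H^{\beta,\beta/2}$ with $\beta>\alpha$, which is \emph{more} than Lipschitz when $\alpha\geq 1$, and has nothing to do with semi-concavity. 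You cannot invoke it for $u$ before you know $u$ has that H\"older regularity, which is exactly what is to be proved. Relatedly, a one-sided bound $\partial^2_{xx}u\geq -J'$ would control $I_\eta u$ only from below; it does not give the two-sided $L_p$ bound on $Iv$ that Lemma~\ref{lemma:L_p-Iv} supplies and that the iteration in Proposition~\ref{prop:W^21_p-est} consumes. Finally, the stability of the value function under truncation $\nu\mapsto 1_{\{|y|>\epsilon\}}\nu$ and under mollification $g\mapsto g_\delta$, together with uniform $L^\infty$ and gradient bounds on $u^{\delta,\epsilon}$, is asserted without proof; in the paper these uniform bounds are the content of Lemmas~\ref{lemma:v-eps-bounded} and~\ref{lemma:v-x-eps-bounded} and require a maximum-principle argument that your proposal does not supply.

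In short: the penalty method is not decorative here; it is what converts the obstacle problem into a family of Cauchy problems to which Proposition~\ref{prop:W^21_p-est} applies, and the chain of uniform a priori estimates in Section~4.1 is what makes that application uniform in $\epsilon$. Your alternative regularization (mollify the obstacle, truncate the L\'evy measure) could conceivably be made to work, but you would still need (i) a PDE, not just an inequality, to feed into Proposition~\ref{prop:W^21_p-est}, and (ii) uniform $L^\infty$, Lipschitz, and $L_p$-source bounds; as written, both are missing.
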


The following corollary is of special interest for the American option problem.

\begin{cor}\label{cor: smooth-fit}
 Under the assumptions of Theorem~\ref{thm: main},
 \begin{enumerate}
  \item[(i)] $\partial_x u \in C(\R\times [0,T))$, i.e., the smooth-fit holds;
  \item[(ii)] $u \in C^{2,1}$ in the region where $u>g$.
 \end{enumerate}
\end{cor}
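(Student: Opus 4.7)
The plan is to deduce both (i) and (ii) directly from Theorem~\ref{thm: main} by combining a parabolic Sobolev embedding with classical interior Schauder estimates for the local part of the operator. No new probabilistic input is needed; the heavy lifting is already packaged in the $W^{2,1}_{p,loc}$ regularity of $u$ and in the non-local estimates from Section~\ref{sec: reg prop}.

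For (i), Theorem~\ref{thm: main} gives $u\in W^{2,1}_{p,loc}(\R\times(0,T))$ for \emph{every} $p\in(1,\infty)$. In one spatial dimension the parabolic Sobolev embedding (see, e.g., Lemma~II.3.3 in \cite{lad}) yields, for any $p>3$,
\[
 W^{2,1}_{p}(D_s) \hookrightarrow H^{1+\alpha,(1+\alpha)/2}(\overline{D_s}), \qquad \alpha = 1-\tfrac{3}{p}.
\]
Applying this on compact subsets of $\R\times(0,T)$ produces $\partial_x u$ locally H\"older continuous, hence in particular continuous, on $\R\times[0,T)$. This is precisely the smooth-fit property.

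For (ii), inside the open continuation region $\C=\{u>g\}$ the variational inequality \eqref{eq:varineq-u} collapses to the linear integro-differential equation
\[
 \partial_t u + \L_D u + Iu - ru = 0 \quad \text{a.e.\ on } \C.
\]
I would read this as a linear parabolic equation for the local operator $\partial_t+\L_D-r$ with right-hand side $f:=-Iu$, and apply the classical interior parabolic Schauder estimate. The coefficients $a,b,r$ lie in $H^{\ell,\ell/2}$ with $a\geq \lambda>0$ by Assumption~\ref{ass: coeff}, so the local hypotheses are in place. The substantive step is to show $Iu\in H^{\beta,\beta/2}_{loc}(\C)$ for some $\beta>0$. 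Given the $H^{1+\alpha',(1+\alpha')/2}_{loc}$ regularity from (i), this is exactly the kind of non-local estimate encoded in Proposition~\ref{prop:W^21_p-est} and Lemma~\ref{lemma:infinite-var-holder}: splitting $I=I_\epsilon+I^\epsilon$ as in \eqref{eq:Isubeps}, the two-order cancellation $\phi(x+h)-\phi(x)-h\,\partial_x\phi(x)$ combined with the H\"older regularity of $\partial_x u$ absorbs the $|y|^{-1-\alpha}$ singularity of $\nu$ at the origin, while the $L^2$ tail condition in Assumption~\ref{ass: Levy measure} controls large jumps. Schauder bootstrapping then upgrades $u$ to $H^{2+\beta,1+\beta/2}_{loc}(\C)$, which is contained in $C^{2,1}(\C)$.

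The main obstacle is the H\"older control of $Iu$ in the regime where the singularity exponent $\alpha$ in Assumption~\ref{ass: Levy measure} approaches $2$. Fortunately the Sobolev-embedding exponent $\alpha'=1-3/p$ can be driven arbitrarily close to $1$ by sending $p\to\infty$ in Theorem~\ref{thm: main}, rendering $\partial_x u$ almost $C^1$; this leaves sufficient margin to compensate for any $\alpha<2$ and to close the estimate uniformly on compact subsets of $\C$, so that the corollary reduces to assembling already-established ingredients.
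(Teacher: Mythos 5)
Your argument for part (i) is essentially identical to the paper's: Theorem~\ref{thm: main} plus the parabolic Sobolev embedding $W^{2,1}_p \hookrightarrow H^{\beta,\beta/2}$ with $\beta = 2-3/p$, then take $p>3$ so $\beta>1$.

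For part (ii), the first half of your argument matches the paper: since $p$ can be chosen large enough that $\beta = 2-3/p > \alpha$, Lemma~\ref{lemma:infinite-var-holder}(i) gives $Iu \in H^{\beta-\alpha-\gamma,\,(\beta-\alpha-\gamma)/2}$ locally, so the nonlocal term can be absorbed into the data. The difference is how you close the argument. You invoke ``Schauder bootstrapping'' to upgrade the strong ($W^{2,1}_{p,loc}$, a.e.) solution directly to $H^{2+\beta',\,1+\beta'/2}_{loc}(\C)$. That is a valid route, but it silently relies on an interior regularity theorem for \emph{strong} solutions of linear parabolic equations (that a $W^{2,1}_p$ solution with H\"older coefficients and H\"older right-hand side is locally classical). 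The paper avoids appealing to that package: it solves the Dirichlet problem $(-\partial_t - \L_D + r)v = Iu$ on a small cylinder $B\times(t_1,t_2)\subset\C$ with boundary/terminal data $u$ by classical Schauder theory (Theorem~9 in \cite{friedman}, pp.~69), and then identifies the resulting $C^{2,1}$ function with $u$ via the uniqueness of viscosity solutions established in Theorem~\ref{thm: main}. Your version is shorter if you grant the interior-regularity-for-strong-solutions theorem; the paper's version only requires classical existence for the Dirichlet problem and the viscosity-uniqueness machinery it has already built. Either way the reasoning is sound; if you want a self-contained proof you should either cite the strong-solution interior Schauder theorem explicitly or add the comparison step (construct the classical solution, then show it equals $u$) as the paper does.

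One small notational warning: you use $\alpha$ both for the Sobolev embedding exponent and for the L\'evy singularity exponent of Assumption~\ref{ass: Levy measure}; keep these distinct, as you do when you switch to $\alpha'$, since the requirement $\beta>\alpha$ in Lemma~\ref{lemma:infinite-var-holder} refers to the L\'evy exponent.
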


\begin{rem}\label{rem: finite var}
 When jumps of $X$ have finite variation, i.e., $\int_{\R} |y| \wedge 1 \, \nu(dy)<\infty$, the proof of the  main result is much simpler. This is because, when jumps of $X$ have finite variation, the infinitesimal generator $\L$ can be rewritten so that its integral component has a reduced form. For any test function $\phi$ that is Lipschitz continuous in its first variable, $\L \phi$ can be decomposed as $\L\phi = \L_D^f \phi+ I^f \phi$, in which $\L_D^f \phi =  a \,\partial^2_{xx} \phi + [b - \int_{|y|\leq 1} h \,\nu(dy)]\,\partial_{x} \phi$ and
\begin{equation}\label{eq:def-If}
  I^{f} \phi(x, t) := \int_{\R} \left[\phi(x+h, t) - \phi(x,t)\right] \, \nu(dy).
\end{equation}
The previous integral is clearly well defined. Indeed $|I^f \phi(x,t)| \leq C \int_{\R} |y|\, \nu(dy) <+\infty$ follows from the Lipschitz continuity of $\phi(\cdot, t)$ and $|h|\leq |y|$. Moreover, $I^f \phi$ is also H\"{o}lder continuous in its both variables; see Lemma~\ref{lemma:fin-var-int} below. Since the value function $u$ is known to be Lipschitz continuous in its first variable (see Lemma~3.1 in \cite{pham-control}), $I^f u$ is already well defined and H\"{o}lder continuous. Therefore, in order to study the regularity of $u$, $I^f u$ can be treated as a driving term in \eqref{eq:varineq-u}. However, this simplification cannot be applied when jumps of $X$ have infinite variation, i.e., $\int_{\R} (|y|\wedge 1) \nu(dy) =\infty$.
\end{rem}

\section{Regularity properties of the integro-differential operator}\label{sec: reg prop}
\subsection{The integral operator}
The integral operator $I$ has two basic features. First, $\nu$ has a singularity at $y=0$. As a result, $I$ maps functions with certain degree of regularity to functions with less regularity. This is contrast to the case in which $\nu$ is a finite measure. In that case $\int_\R \phi(x+h, t) \nu(dy)$ is already well defined, for any $\phi$ with at most linear growth, and this integral has the same regularity as $\phi$; see \cite{yang}. Second, $I$ is a nonlocal operator. Therefore, regularity of $I\phi$ on a given interval $D$ depends on $\phi$ outside $D$. In this subsection, we shall study these two features in detail and analyze the regularity of $I\phi$ when $\phi$ is either a function in certain H\"{o}lder or Sobolev spaces.

Consider $I$ as an operator between H\"{o}lder spaces. When jumps of $X$ have finite variation, we can work with the reduced integral operator $I^f$ in \eqref{eq:def-If}. It has the following regularity property.

\begin{lem}\label{lemma:fin-var-int}
 Let Assumption~\ref{ass: Levy measure} hold with $0\leq \alpha<1$ and $s>0$. For any $\phi$ which is Lipschitz continuous in its first variable and $1/2$-H\"{o}lder continuous in its second variable,
 \[
 \begin{array}{ll}
  I^f \phi \in H^{1-\gamma, \frac{1-\gamma}{2}}(\overline{D_s}) \quad  \forall \gamma \in (0,1), & \text{ when } \alpha =0;\\
  I^f \phi \in H^{1-\alpha, \frac{1-\alpha}{2}}(\overline{D_s}), & \text{ when } 0<\alpha<1.
 \end{array}
 \]
\end{lem}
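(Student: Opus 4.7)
The plan is to run a standard splitting of the integral over $\{|y|\leq\delta\}$ and $\{|y|>\delta\}$, choosing $\delta$ to depend on the scale of the perturbation. The key ingredients are the density bound $\rho(y)\leq M|y|^{-1-\alpha}$ near the origin from Assumption~\ref{ass: Levy measure}, the size and joint Lipschitz estimates $|h|\leq|y|$ and $|h(x_1,y,t_1)-h(x_2,y,t_2)|\leq|y|(|x_1-x_2|+|t_1-t_2|^{1/2})$ from Assumption~\ref{ass: coeff}, and the finite-variation condition $\int_\R|y|\,\nu(dy)<\infty$ that the hypothesis $\alpha<1$ delivers (the integral near $0$ is controlled by $\int_{|y|\leq 1}|y|^{-\alpha}\,dy<\infty$, and the large-$|y|$ part by $\int_{|y|>1}|y|^2\,\nu(dy)$). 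Uniform boundedness of $I^f\phi$ then follows at once from Lipschitz continuity of $\phi$: $|I^f\phi(x,t)|\leq L\int_\R|h|\,\nu(dy)\leq L\int_\R|y|\,\nu(dy)<\infty$.

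For the spatial seminorm, I fix $x_1,x_2$ with $|x_1-x_2|\leq 1$, set $h_i=h(x_i,y,t)$, and study the double difference
\[
\Delta(y)=[\phi(x_1+h_1,t)-\phi(x_1,t)]-[\phi(x_2+h_2,t)-\phi(x_2,t)].
\]
On $\{|y|\leq\delta\}$ I bound each bracket separately by $L|h_i|\leq L|y|$, so that the contribution is at most $\int_{|y|\leq\delta}2L|y|\,\nu(dy)=O(\delta^{1-\alpha})$ (and $O(\delta)$ when $\alpha=0$). On $\{|y|>\delta\}$ I regroup as $[\phi(x_1+h_1,t)-\phi(x_2+h_2,t)]-[\phi(x_1,t)-\phi(x_2,t)]$ and combine the Lipschitz bound on $\phi$ with $|h_1-h_2|\leq|y|\cdot|x_1-x_2|$ to obtain $|\Delta(y)|\leq L|x_1-x_2|(2+|y|)$. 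Since $\int_{|y|>\delta}(1+|y|)\,\nu(dy)=O(\delta^{-\alpha})$ for $\alpha>0$ and $O(\log(1/\delta))$ for $\alpha=0$, the tail contribution is $O(|x_1-x_2|\delta^{-\alpha})$ or $O(|x_1-x_2|\log(1/\delta))$ respectively. Choosing $\delta=|x_1-x_2|$ balances the two pieces: for $0<\alpha<1$ it yields $O(|x_1-x_2|^{1-\alpha})$, while for $\alpha=0$ the logarithmic factor is absorbed at the cost of losing an arbitrarily small exponent, giving $C_\gamma|x_1-x_2|^{1-\gamma}$.

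The temporal seminorm is obtained by the identical pattern applied to
\[
\Delta_t(y)=[\phi(x+h_1,t_1)-\phi(x,t_1)]-[\phi(x+h_2,t_2)-\phi(x,t_2)],\qquad h_i=h(x,y,t_i),
\]
now using the $1/2$-Hölder estimate on $\phi$ in $t$ together with $|h_1-h_2|\leq|y|\cdot|t_1-t_2|^{1/2}$. Choosing $\delta=|t_1-t_2|^{1/2}$ delivers the parabolic exponent $(1-\alpha)/2$, or $(1-\gamma)/2$ when $\alpha=0$. The main conceptual obstacle — and the precise reason the statement is confined to $\alpha<1$ — is that the reduced operator $I^f$ does not use any cancellation against $\partial_x\phi$, so for small jumps the integrand is only $O(|y|)$ rather than $O(|y|^2)$; this is exactly why $\int_{|y|\leq 1}|y|\,\nu(dy)<\infty$ is indispensable. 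For $\alpha\geq 1$ the argument breaks down and one must instead work with the full operator $I$ in~\eqref{eq:def-I}, which is the substance of the remainder of Section~\ref{sec: reg prop}.
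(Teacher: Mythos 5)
Your proof is correct and follows essentially the same strategy the paper uses for Lemma~\ref{lemma:infinite-var-holder} (the paper omits the proof of Lemma~\ref{lemma:fin-var-int}, noting only that it is ``similar''): split the integral at a scale $\delta$ tied to the increment, bound the small-jump part by Lipschitz continuity of $\phi$ and the near-origin density bound, bound the tail by regrouping and using the joint estimates on $h$, then balance by choosing $\delta=|x_1-x_2|$ or $|t_1-t_2|^{1/2}$. The adaptation to $I^f$ (two-regime rather than three-regime split, no compensation against $\partial_x\phi$) is exactly the natural simplification, and your closing remark correctly identifies why the argument is confined to $\alpha<1$.
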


However when jumps of $X$ have infinite variation,  the integral term $I^f \phi$ is no longer well defined for Lipschitz continuous functions. Hence we work with $\L$ and its integral part $I$ in the forms of \eqref{eq:def-L} and \eqref{eq:def-I}. We will see that if we choose an appropriate test function $\phi$, $I\phi$ is still well defined and H\"{o}lder continuous in both its variables. Regularity estimates of the following type have been obtained in \cite{silvestre} and \cite{Mikulevicius-Zhang}.

\begin{lem}\label{lemma:infinite-var-holder}
 Let Assumption~\ref{ass: Levy measure} hold with $\alpha\in [1,2)$ and $s>0$.
 \begin{enumerate}
 \item[(i)] Suppose that $\phi$ satisfies $|\phi(x_1, t_1) - \phi(x_2, t_2)| \leq L(|x_1-x_2|+ |t_1-t_2|^{\frac12})$ for some $L>0$ and any $(x_1, t_1), (x_2, t_2)\in E_s$. If, moreover,  $\phi \in H^{\beta ,
 \frac{\beta}{2}}(\overline{D_s^1})$ for some $\beta\in (\alpha,
 2)$, then $I \phi\in H^{\beta-\alpha-\gamma,
 \frac{\beta-\alpha-\gamma}{2}}\left(\overline{D_s}\right)$ and
 \begin{equation}\label{eq:Iu-holder-norm}
  \left\|I\phi\right\|^{\left(\beta-\alpha-\gamma\right)}_{\overline{D_s}} \leq
  C \left(L + \|\phi\|^{(\beta)}_{\overline{D_s^1}}\right),
 \end{equation}
 for a positive constant $C$ depending on $D$, $\alpha$, and $\beta$.

 \item[(ii)] If $\phi\in H^{\beta, \frac{\beta}{2}}(E_s)$ for some $\beta\in (\alpha,
 2)$, then $I\phi \in H^{\beta-\alpha-\gamma,
 \frac{\beta-\alpha-\gamma}{2}}(E_s)$ and
 \begin{equation}\label{eq:Iu-holder-norm-R}
  \left\|I
  \phi\right\|_{E_s}^{\left(\beta-\alpha-\gamma\right)}
  \leq C \, \|\phi\|^{(\beta)}_{E_s},
 \end{equation}
 for a positive constant $C$ depending on $\alpha$ and $\beta$.
 \end{enumerate}
 Here $\gamma=0$ when $\alpha \in (1,2)$; $\gamma$ is an arbitrary number in $(0, \beta -\alpha)$ when $\alpha =1$.
\end{lem}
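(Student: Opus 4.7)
The plan is to split the integral defining $I\phi$ at $|y|=1$, writing $I\phi = I_{\le 1}\phi + I_{>1}\phi$. The large-jump piece $I_{>1}\phi$ does not involve the compensator, so only the global Lipschitz / $1/2$-Hölder control on $\phi$ together with $|h|\le|y|$ and the second-moment assumption $\int_{|y|>1}|y|^{2}\,\nu(dy)<\infty$ from Assumption~\ref{ass: Levy measure} are needed; this already produces a $C^{1,1/2}$ function of $(x,t)$ with norm controlled by $L$. All the real work lies in the small-jump piece, for which the crucial observation is that $(x,t)\in D_s$ together with $|h|\le|y|\le 1$ forces $(x+h,t)\in \overline{D^1_s}$, so the enhanced regularity $\phi\in H^{\beta,\beta/2}(\overline{D^1_s})$ is available on the integrand.

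For pointwise finiteness, I would use a Taylor-type representation: since $\beta\in(\alpha,2)\subseteq(1,2)$, the derivative $\partial_x\phi$ exists and is $(\beta-1)$-Hölder continuous in $x$, so
\begin{equation*}
 \phi(x+h,t)-\phi(x,t)-h\,\partial_x\phi(x,t) = h\int_0^1[\partial_x\phi(x+sh,t)-\partial_x\phi(x,t)]\,ds,
\end{equation*}
bounded by $C\,\|\phi\|^{(\beta)}_{\overline{D^1_s}}|h|^{\beta}\le C\,\|\phi\|^{(\beta)}_{\overline{D^1_s}}|y|^{\beta}$. Integrating against $\rho(y)\le M|y|^{-1-\alpha}$ on $\{|y|\le 1\}$ yields a quantity proportional to $\int_0^1 r^{\beta-\alpha-1}\,dr$, finite as soon as $\beta>\alpha$; the $\gamma$-loss in the statement is precisely the slack needed at the boundary case $\alpha=1$, where a logarithm appears.

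To obtain the Hölder continuity in $x$, I would set $r=|x_1-x_2|$ and split the integration region further at $|y|=r$. On $\{|y|\le r\}$ one estimates $I\phi(x_1,t)$ and $I\phi(x_2,t)$ separately with the Taylor bound above, producing a contribution of order $r^{\beta-\alpha}$ after integrating $|y|^{\beta-1-\alpha}$ from $0$ to $r$. On $\{r<|y|\le 1\}$ one rewrites the integrand as a second-order finite difference in $x$ and invokes the $(\beta-1)$-Hölder bound on $\partial_x\phi$, which yields a prefactor $r^{\beta-1}$ multiplied by $\int_r^1 |y|^{-\alpha}\,dy=O(r^{1-\alpha})$ (logarithmic when $\alpha=1$, hence again the $\gamma$-loss). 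The large-jump piece contributes at most $C\,L\,r$, absorbed in the other estimates. The Hölder estimate in $t$ is analogous with the threshold $|y|=|t_1-t_2|^{1/2}$ and uses the $1/2$-Hölder continuity of $\phi$ in time. Summing the three regimes gives \eqref{eq:Iu-holder-norm}. Part (ii) is proved by the same two-scale splitting, with the simplification that the global norm $\|\phi\|^{(\beta)}_{E_s}$ now controls the integrand uniformly, so the large-jump term can be treated by the same $H^\beta$-based estimate and \eqref{eq:Iu-holder-norm-R} follows.

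The main obstacle will be the combinatorial balancing in the Hölder-continuity step: the compensator term $h\,\partial_x\phi(x,t)\,1_{\{|y|\le 1\}}$ must be kept and differenced carefully, since its $x$-difference does not vanish as $|y|\to 0$ and would otherwise overwhelm the gain coming from the $(\beta-1)$-Hölder regularity of $\partial_x\phi$. Choosing the threshold at $|y|=r$ is what makes the two contributions scale in a compatible way and produces the sharp exponent $\beta-\alpha$ (up to the arbitrarily small $\gamma$ at $\alpha=1$).
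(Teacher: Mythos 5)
Your proposal matches the paper's proof almost exactly: both split the integral at thresholds $|y|\le r=|x_1-x_2|$, $r<|y|\le 1$, and $|y|>1$, estimate the innermost piece via the Taylor/mean-value bound against $|y|^{\beta}\rho(y)$, treat the intermediate piece via second-order finite differences in $x$ using the $(\beta-1)$-Hölder regularity of $\partial_x\phi$, and absorb the large-jump piece by the global Lipschitz control, with the $\gamma$-loss at $\alpha=1$ coming from the logarithm in $\int_r^1|y|^{-1}\,dy$. The time-regularity step with threshold $|t_1-t_2|^{1/2}$ and the derivation of part (ii) as the $D=\R$ specialization are also the same as in the paper.
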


Since the proofs of Lemmas~\ref{lemma:fin-var-int} and \ref{lemma:infinite-var-holder} are similar, we only present the proof of Lemma~\ref{lemma:infinite-var-holder}.

\begin{proof}[{\bf Proof of Lemma~\ref{lemma:infinite-var-holder}}]
Statement (ii) is a special case of Statement (i) when the domain is taken to be $\R$, instead of $D$. In particular, $\|\cdot \|^{(\beta)}_{E_s} \geq L$; see Definition~\ref{def:sobolev_holder space}. It then suffices to prove statement (i).  For notational simplicity, $C$ represents a generic constant throughout the rest of proof.

 \textit{\underline{Step 1}: Estimate $\max_{\overline{D_s}} |I\phi|$.}
 For any $(x,t)\in \overline{D_s}$,
 \begin{eqnarray*}
  \left| I\phi(x,t) \right| &\leq& \int_{|y|\leq 1} \left|\phi(x+h, t) - \phi(x,t) -
  h \, \partial_{x} \phi(x,t)\right| \nu(dy) + \int_{|y|>1} \left|\phi(x+h, t) -
  \phi(x,t)\right|\nu(dy)\\
  &\leq& \int_{|y|\leq 1} |h|\,\left|\partial_{x} \phi(z, t) - \partial_{x} \phi(x, t)\right| \nu(dy) + L \int_{|y|>1} |h| \,\nu(dy) \nonumber\\
  &\leq& \|\phi\|^{(\beta)}_{\overline{D_s^1}} \int_{|y|\leq 1} |y|^{\beta} \nu(dy) + L \int_{|y|>1}
  |y| \nu(dy) \nonumber\\
  &\leq& C\left(L +\|\phi\|^{(\beta)}_{\overline{D_s^1}}\right), \nonumber
 \end{eqnarray*}
 where the second inequality follows from the mean value theorem with $|z-x|\leq |h|\leq |y|\leq 1$; the third inequality is the result of the $(\beta-1)$-H\"{o}lder continuity of $\partial_x \phi$ on $\overline{D_s^1}$ and $|h|\leq |y|$; the fourth inequality holds thanks to Assumption~\ref{ass: Levy measure}.

 \textit{\underline{Step 2}: Show that $I\phi$ is H\"{o}lder continuous in $x$.} For $x_1, x_2\in D$ and $t\in [0,s]$, we break up $|I\phi(x_1, t)-I\phi(x_2, t)|$ into three parts:
 \begin{eqnarray*}
  && \left|I\phi(x_1, t) - I\phi(x_2, t)\right| \leq I_1 + I_2 +I_3, \quad
  \text{ in which } \label{eq:infinite-var-I}\\
  && I_1(x,t) := \int_{|y|\leq \epsilon} \left[\left|\phi(x_1+h(x_1), t) - \phi(x_1, t) - h(x_1) \, \partial_x \phi(x_1,
  t)\right| \right.\\
  && \hspace{2.3cm}+  \left.\left|\phi(x_2+h(x_2), t) - \phi(x_2, t) - h(x_2) \,\partial_x \phi(x_2, t)\right|\right] \nu(dy),\nonumber\\
  && I_2(x,t) := \int_{\epsilon <|y| \leq 1}\left[\left|\phi(x_1+h(x_1), t) - \phi(x_1, t) -  \phi(x_2+h(x_2), t) + \phi(x_2, t)\right| \right.\\
  && \hspace{2.7cm}+ \left.\left|h(x_1)\partial_x \phi(x_1, t) - h(x_2)\partial_x \phi(x_2,
  t)\right|\right] \nu(dy), \nonumber\\
  && I_3(x,t) := \int_{|y|>1} \left[\left|\phi(x_1+h(x_1), t) - \phi(x_2+h(x_2), t)\right| + \left|\phi(x_1,
  t)-\phi(x_2,t)\right|\right] \nu(dy), \nonumber
 \end{eqnarray*}
 where variables $y$ and $t$ are ignored in $h$ and the constant $\epsilon \leq 1$ will be determined later. Let us estimate each above integral term separately. First, an estimate similar to that in Step 1 shows that
 $
  I_1 \leq 2 \,\|\phi\|^{(\beta)}_{\overline{D_s^1}} \int_{|y|\leq \epsilon} |y|^{\beta}
  \nu(dy)  =C \|\phi\|^{(\beta)}_{\overline{D_s^1}}\,
  \epsilon^{\beta-\alpha}.
 $
 Second, it follows from the Lipschitz continuity of $\phi(\cdot, t)$, $(\beta-1)$-H\"{o}lder continuity of $\partial_x \phi$, and $|h|\leq |y|$ that
 \begin{eqnarray*}
  &&\left|\phi(x_1+h(x_1), t) - \phi(x_1, t) -  \phi(x_2+h(x_2), t) + \phi(x_2, t)\right|\\
  &\leq& \left|\phi(x_1+h(x_1), t) - \phi(x_1, t) -  \phi(x_2+h(x_1), t) + \phi(x_2, t)\right| + |\phi(x_2+h(x_1), t) - \phi(x_2 + h(x_2), t)|\\
  &\leq&\left|\int_0^{h(x_1)} \left|\partial_x \phi(x_1 + z, t) - \partial_x \phi(x_2 + z, t)\right| dz\right| + C|x_1-x_2|\,|y|\\
  &\leq& \|\phi\|^{(\beta)}_{\overline{D^1_s}} \, |x_1-x_2|^{\beta-1} |y| + C|x_1-x_2|\,|y|.
 \end{eqnarray*}
 Similarly, $\left|h(x_1)\partial_x \phi(x_1, t) - h(x_2)\partial_x \phi(x_2,
  t)\right| \leq \|\phi\|^{(\beta)}_{\overline{D^1_s}} \,|y| \,(|x_1-x_2| + |x_1-x_2|^{\beta-1})$. Therefore,
 \begin{eqnarray*}
  I_2 &\leq& \int_{\epsilon<|y|\leq 1} C\left(1+ \|\phi\|^{(\beta)}_{\overline{D^1_s}}\right) |x_1-x_2|^{\beta-1} |y|\nu(dy) \leq C \left(1+ \|\phi\|^{(\beta)}_{\overline{D_s^1}}\right) \ |x_1-x_2|^{\beta-1} \cdot \left\{\begin{array}{ll} \epsilon^{1-\alpha} -1 & \text{ when } 1<\alpha<2\\
   -\log{\epsilon} & \text{ when } \alpha=1
  \end{array}\right.,
 \end{eqnarray*}
 where the second inequality follows from Assumption~\ref{ass: Levy measure}. Third, it is clear from the Lipschitz continuity of $\phi$ and \eqref{ass: h} that $I_3 \leq C |x_1-x_2| \int_{|y|>1} (1+|y|) \nu(dy)$.

 Now pick $\epsilon = |x_1-x_2|\wedge 1$. Since $1\leq \alpha
 <2$ and $\beta>\alpha$, we have $\epsilon^{\beta-\alpha}\leq
 |x_1-x_2|^{\beta-\alpha}$, $\epsilon^{-\alpha}-1 \leq
 |x_1-x_2|^{-\alpha}$, $\epsilon^{1-\alpha}-1\leq
 |x_1-x_2|^{1-\alpha}$ and $-\log{\epsilon} \leq \frac{1}{\alpha+\gamma-1}
 |x_1-x_2|^{1-\alpha-\gamma}$. All above estimates combined imply that
 \begin{equation*}\label{eq:inf-var-holder-x}
  |I\phi(x_1,t) - I\phi(x_2,t)| \leq C \left(1 +
  \|\phi\|^{(\beta)}_{\overline{D_s^1}}\right)|x_1-x_2|^{\beta-\alpha-\gamma},
 \end{equation*}
 for a constant $C$ independent of $x_1$, $x_2$, and $t$.

 \textit{\underline{Step 3}: Show that $I\phi$ is H\"{o}lder continuous in $t$.} The proof is similar to that in Step 2. First we separate $|I\phi(x, t_1) - I\phi(x, t_2)|$ into three parts as above. Then using  $|\partial_x \phi(x, t_1) -\partial_x \phi(x, t_2)| \leq \|\phi\|^{(\beta)}_{\overline{D_s^1}} \,|t_1-t_2|^{\frac{\beta-1}{2}}$ (see Definition~\ref{def:sobolev_holder space}), together with semi-H\"{o}lder continuity of $h(x, \cdot)$ in \eqref{ass: h}, and choosing $\epsilon=|t_1-t_2|^{\frac12}\wedge 1$, we can obtain
 \begin{equation*}\label{eq:inf-var-holder-t}
 |I\phi(x,t_1) - I\phi(x, t_2)| \leq C \left(1 + \|\phi\|^{(\beta)}_{\overline{D_s^1}}\right) |t_1-t_2|^{\frac{\beta-\alpha-\gamma}{2}},
\end{equation*}
for a constant $C$ independent of $x$, $t_1$, and $t_2$.
\end{proof}

When $I$ is considered as an operator between Sobolev spaces, it maps $W^{2,1}_p-$functions to $L_p-$functions on a smaller domain.

\begin{lem}\label{lemma:L_p-Iv}
Let Assumption~\ref{ass: Levy measure} hold. Consider a function $\phi\in W^{2,1}_{p}(D\times (t_1, t_2))$ such that $\phi$ is bounded and $\partial_x \phi$ is locally bounded on $\R\times [t_1, t_2]$. Then for any $\eta>0$ and $\alpha\in[0,2)$,
\begin{equation}\label{eq:Lp-est-Iv}
 \|I \phi \|_{L_p(D \times (t_1,t_2))} \leq C \eta^{2-\alpha}
 \|\phi\|_{W^{2,1}_p(D^{\eta} \times (t_1, t_2))} +
 C \left(\max_{\R\times [t_1, t_2]} |\phi| + \max_{D^{1}\times [t_1,t_2]}
 |\partial_x \phi|\right) \cdot \left\{\begin{array}{ll} (1+ \eta^{1-\alpha}),\, \alpha \neq 1 \\
 (1-\log{\eta}),\, \alpha=1\end{array}\right.,
\end{equation}
for some constant $C$ depending on $D$, $t_1$, and $t_2$.
\end{lem}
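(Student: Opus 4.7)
The plan is to split the defining integral \eqref{eq:def-I} according to the size of $|y|$ using the threshold $\eta$ that appears on the right-hand side. Concretely, I would write $I\phi = I^{(1)}\phi + I^{(2)}\phi + I^{(3)}\phi$, where $I^{(1)}$ is the integral over $\{|y|\leq\eta\}$ with the full second-order compensator $\phi(x+h)-\phi(x)-h\partial_x\phi(x)$, $I^{(2)}$ is the integral over $\{\eta<|y|\leq 1\}$ with the same compensated integrand, and $I^{(3)}$ is the integral over $\{|y|>1\}$ of the plain difference $\phi(x+h)-\phi(x)$. Each piece will be controlled by a different one of the three ingredients in the right-hand side of \eqref{eq:Lp-est-Iv}.

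For the small-jump piece $I^{(1)}\phi$, I would invoke the integral form of Taylor's theorem (valid for $W^{2,1}_p$ functions a.e. in $x$):
\[
\phi(x+h,t)-\phi(x,t)-h\,\partial_x\phi(x,t)=\int_0^{h}(h-z)\,\partial^2_{xx}\phi(x+z,t)\,dz,
\]
so that $|I^{(1)}\phi(x,t)|\leq \int_{|y|\leq\eta}|y|\int_{-|y|}^{|y|}|\partial^2_{xx}\phi(x+z,t)|\,dz\,\nu(dy)$, using the bound $|h(x,y,t)|\leq|y|$. Minkowski's integral inequality (applied twice, in $z$ and in $y$) then moves the $L_p(D\times(t_1,t_2))$ norm inside both integrals, and shift-invariance of the $L_p$ norm bounds $\|\partial^2_{xx}\phi(\cdot+z,\cdot)\|_{L_p(D\times(t_1,t_2))}$ by $\|\partial^2_{xx}\phi\|_{L_p(D^\eta\times(t_1,t_2))}$ for $|z|\leq\eta$. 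The remaining $y$-integral is controlled by Assumption~\ref{ass: Levy measure}:
\[
\int_{|y|\leq\eta}|y|^2\,\nu(dy)\leq 2M\int_0^\eta y^{1-\alpha}\,dy\leq C\,\eta^{2-\alpha},
\]
giving exactly the first term on the right-hand side of \eqref{eq:Lp-est-Iv}.

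For the medium-jump piece $I^{(2)}\phi$, I would use the mean value theorem on $\phi(x+h,t)-\phi(x,t)$: since $x\in D$ and $|h|\leq|y|\leq 1$, the segment from $x$ to $x+h$ lies in $D^1$, so both $\phi(x+h,t)-\phi(x,t)$ and $h\,\partial_x\phi(x,t)$ are bounded by $|y|\cdot\max_{D^1\times[t_1,t_2]}|\partial_x\phi|$. The $y$-integral $\int_{\eta<|y|\leq 1}|y|\,\nu(dy)$ is computed directly against the $M/|y|^{1+\alpha}$ bound, producing $C(1+\eta^{1-\alpha})$ when $\alpha\neq 1$ and $C(1-\log\eta)$ when $\alpha=1$. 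For the large-jump piece $I^{(3)}\phi$, the global bound $|\phi|\leq \max_{\R\times[t_1,t_2]}|\phi|$ gives a pointwise estimate $|I^{(3)}\phi(x,t)|\leq 2\max|\phi|\cdot\nu(\{|y|>1\})$, and $\nu(\{|y|>1\})$ is finite by Assumption~\ref{ass: Levy measure}. The $L_p(D\times(t_1,t_2))$ norms of the (essentially constant) bounds on $I^{(2)}\phi$ and $I^{(3)}\phi$ produce the second term on the right-hand side of \eqref{eq:Lp-est-Iv}, the volume factor $|D\times(t_1,t_2)|^{1/p}$ being absorbed into the constant $C$ (which is allowed to depend on $D$, $t_1$, $t_2$).

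The main obstacle is the small-jump piece $I^{(1)}\phi$: one must justify the Taylor-with-integral-remainder representation for a generic $W^{2,1}_p$ function (standard via approximation by smooth functions, since both sides are continuous in $\phi$ in the $W^{2,1}_p$ topology) and then correctly interchange the $L_p$ norm with the $\nu\times dz$ integration via Minkowski, while keeping the shifted domain inside $D^\eta$. The medium- and large-jump pieces are routine once one observes that $|h|\leq|y|$ lets all $y$-integrals be dominated by the simpler bound on the density $\rho$ from Assumption~\ref{ass: Levy measure}.
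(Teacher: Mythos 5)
Your proof is correct and uses the same three-part decomposition of $I\phi$ by jump size (small jumps $|y|\leq\eta$, medium jumps $\eta<|y|\leq 1$, large jumps $|y|>1$) and the same Taylor-with-integral-remainder estimate for the small-jump contribution that the paper employs. The only technical deviation is in how the $L_p$ norm of $I^{(1)}\phi$ is controlled: you apply Minkowski's integral inequality twice (in $z$ and in $y$) and then use shift-invariance, whereas the paper applies Jensen's inequality to the inner $z$-integral, H\"{o}lder's inequality against the weight $|y|^{1-\alpha}$, and Fubini; both routes produce the same factor $C\eta^{2-\alpha}\|\partial^2_{xx}\phi\|_{L_p(D^\eta\times(t_1,t_2))}$, and yours is arguably the cleaner way to organize the computation.
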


\begin{rem}\label{remark:L-p est finite var}
 When $X$ has finite variation jumps, i.e., $0\leq \alpha<1$, $\eta$ in \eqref{eq:Lp-est-Iv} can be chosen as zero. Hence $L_p-$norm of $I\phi$ only depends on $\max_{\R\times [t_1, t_2]} |\phi|$ and $\max_{D^{1}\times [t_1,t_2]} |\partial_x \phi|$.
\end{rem}

\begin{proof}
 Utilizing truncation and smooth mollification, one can construct a sequence of smooth function $(\phi^\epsilon)_{\epsilon>0}$ such that $\phi^\epsilon$ converges to $\phi$ in $W^{2,1}_{p}$ and $I\phi^\epsilon$ converges to $I\phi$ in $L_p$ as $\epsilon \rightarrow 0$ (c.f. Section 5.3 and Appendix C.4 in \cite{evans}). Therefore, it suffices to prove the statement for a smooth function $\phi$.

 To this end, observing that $\phi(x+h,t) - \phi(x,t) - h \partial_x \phi(x,t) = h^2 \int_0^1
  (1-z) \,\partial^2_{xx} \phi(x+zh, t) \, dz$,
 the integral $I\phi$ can be bounded above by three integral terms:
  \begin{equation*}
 \begin{split}
  \left|I \phi(x,t)\right|
  \leq& \int_{|y|\leq \eta} h^2\, \nu(dy)\int_0^1
  dz \left|\partial^2_{xx} \phi(x+zh, t)\right|\\
  & + \int_{\eta< |y|\leq 1} \nu(dy) \left|\phi(x+h,t) - \phi(x,t) - h \partial_x \phi(x,t)
  \right| + \int_{|y|>1} \nu(dy) \left|\phi(x+h, t) - \phi(x,t)\right|\\
  =:&  I_1+ I_2 + I_3.
 \end{split}
 \end{equation*}
 In the rest of proof, the $L_p$-norm of each above term is estimated respectively. First,
 \begin{equation*}\label{eq:Lp-I_ij}
 \begin{split}
 \left\|I_1(\cdot, t)\right\|^p_{L_p(D)} & = \int_D dx \left[\int_{|y|\leq \eta} h^2 \nu(dy) \int_0^1 dz \left|\partial^2_{xx}\phi(x+zh,
 t)\right|\right]^p \\
 & \leq \int_D dx \int_0^1 dz \left[\int_{|y|\leq \eta} \nu(dy) \, h^2 \left|\partial^2_{xx}\phi(x+zh,
 t)\right|\right]^p\\
 & \leq C \int_D dx \int_0^1 dz \left[\int_{|y|\leq \eta} dy \,|y|^{1-\alpha} \left|\partial^2_{xx} \phi(x+zh,
 t)\right|\right]^p\\
 & \leq C \int_D dx \int_0^1 dz \left(\int_{|y|\leq \eta} dy\,
 |y|^{1-\alpha}\right)^{\frac{p}{q}} \cdot \int_{|y|\leq \eta} dy\, |y|^{1-\alpha} \left|\partial^2_{xx}\phi(x+zh,
 t)\right|^p\\
 &\leq C \left(\int_{|y|\leq \eta} dy\,
 |y|^{1-\alpha}\right)^{\frac{p}{q}} \cdot \int_{|y|\leq \eta} dy\, |y|^{1-\alpha} \int_0^1 dz\int_D dx \left|\partial^2_{xx}\phi(x+zh,
 t)\right|^p\\
 &\leq  C \,\eta^{(2-\alpha)p} \left\|\partial^2_{xx} \phi(\cdot,
 t)\right\|^p_{L_p(D^\eta)},
 \end{split}
 \end{equation*}
 where the first inequality follows from Fubini's theorem and
Jensen's inequality since $p>1$; the second inequality is a result of the assumption that $|h|\leq |y|$ and Assumption~\ref{ass: Levy measure}; the third inequality follows from
 H\"{o}lder inequality with $1/p+1/q=1$; the fourth inequality utilizes Fubini's theorem; and the fifth inequality
 holds since $x+zh\in D^\eta$ for any $|h|\leq |y|\leq \eta$ and $z\in [0,1]$.
 Second, since $x+h\in D^1$ for $x\in D$, and $|h|\leq |y|\leq 1$, it follows that
 \begin{equation*}\label{eq:Lp-I_2}
 \begin{split}
  \left\|I_2 (\cdot, t)\right\|_{L_p(D)} \leq C \max_{D^1 \times [t_1, t_2]}  \left|\partial_x \phi\right| \cdot \int_{\eta\leq |y|\leq 1} |y| \nu(dy)\leq C \max_{D^1 \times [t_1,t_2]} \left|\partial_x \phi\right| \cdot \left\{\begin{array}{ll} (1+ \eta^{1-\alpha}),\, \alpha \neq 1 \\
 (1-\log{\eta}),\, \alpha=1\end{array}\right.
 \end{split}
 \end{equation*}
 Third, it is clear that
 $
  \left\|I_3 \phi(\cdot, t)\right\|_{L_p(D)} \leq C \cdot
 \max_{\R \times [t_1,t_2]} |\phi|
 $,
 since $\phi$ is bounded.

 Now, recall $\|I\phi\|_{L_p(D\times(t_1,t_2))} := \left[\int_{t_1}^{t_2} \|I\phi(\cdot, t)\|_{L_p(D)} \,
 dt\right]^{\frac1p}$. The statement then follows from above $L_p$-norm estimates on $I_k$, $k=1,2,3$.
\end{proof}

\subsection{An interior estimate}
The $L_p-$norm estimate of the integral term in Lemma~\ref{lemma:L_p-Iv} helps to derive the following $W^{2,1}_p-$norm estimate for solutions of the Cauchy problem below. This estimate is a nonlocal version of the parabolic Calderon-Zygmund estimate (c.f. Theorem~9.1 in \cite{lad} pp.341).

\begin{prop}\label{prop:W^21_p-est}
Suppose that Assumptions~\ref{ass: coeff} and \ref{ass: Levy measure} are satisfied. Let $v$ be a $W^{2,1}_{p, loc}-$solution of the following Cauchy problem:
\begin{equation*}\label{eq:pide-cauchy}
\begin{split}
 &\left(\partial_t - \L_{D} - I + r\right) v = f(x,t), \hspace{1cm} (x,t)\in \R\times (0,T],\\
 & v(x,0) = g(x), \hspace{3.4cm} x\in \R,
\end{split}
\end{equation*}
where $f\in L_{p, loc}(E_T)$. If $v$ is bounded and $\partial_x v$ is locally bounded on $E_T$, then for any $s\in (0,T)$, there exist $\delta\in (0,s)$ and $C_{\delta}$, depending on $\delta$, such that
\begin{equation}\label{eq:v-W^21_p-est}
 \left\| v \right\|_{W^{2,1}_p(D\times (s,T))} \leq
 C_{\delta} \left[\max_{E_T} |v| + \max_{D^{\delta/4 +1}\times [0,T]} |\partial_x v| + \| f\|_{L^p(D^{\delta/4} \times (\frac{\delta}{2}, T))}\right].
\end{equation}
\end{prop}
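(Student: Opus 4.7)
The plan is to rewrite the equation as a uniformly parabolic equation with $Iv$ treated as an inhomogeneous source term, apply the classical parabolic $W^{2,1}_p$ interior Calderon-Zygmund estimate to the local operator $\partial_t - \L_D + r$, and then use Lemma~\ref{lemma:L_p-Iv} to control the $L^p$ norm of $Iv$ by a small multiple of the $W^{2,1}_p$ norm of $v$ on a slightly enlarged cylinder. The small prefactor $\eta^{2-\alpha}$ from Lemma~\ref{lemma:L_p-Iv} is then absorbed by iterating over nested cylinders, exploiting the a priori finiteness of $\|v\|_{W^{2,1}_p}$ on compact subsets furnished by the $W^{2,1}_{p, loc}$ hypothesis.

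More concretely, I would fix $\delta \in (0, s)$ and introduce nested space-time cylinders $Q_\rho := D^\rho \times (s_\rho, T)$ for $\rho \in [0, \delta/4]$, with $s_\rho$ decreasing from $s$ to $\delta/2$. For $0 \leq \rho < \rho' \leq \delta/4$, multiplying $v$ by a smooth space-time cutoff $\zeta$ supported in $Q_{\rho'}$ and equal to one on $Q_\rho$ produces $w := \zeta v$, a compactly supported function with zero initial data satisfying
\[
(\partial_t - \L_D + r) w = \zeta(f + Iv) + v\,\partial_t\zeta - a(v\,\partial^2_{xx}\zeta + 2\,\partial_x\zeta\,\partial_x v) - bv\,\partial_x\zeta .
\]
Assumption~\ref{ass: coeff} guarantees that $\partial_t - \L_D + r$ is uniformly parabolic with $H^{\ell, \ell/2}$ coefficients, so the Cauchy-problem $W^{2,1}_p$ estimate (Theorem~9.1 of \cite{lad}) applied to $w$ on $\R \times (0, T)$ gives, writing $F(\rho) := \|v\|_{W^{2,1}_p(Q_\rho)}$,
\[
F(\rho) \leq C\bigl(\|f\|_{L^p(Q_{\rho'})} + \|Iv\|_{L^p(Q_{\rho'})} + (\rho'-\rho)^{-2}\bigl(\max_{E_T}|v| + \max_{D^{\delta/4+1}\times[0,T]}|\partial_x v|\bigr)\bigr),
\]
since $\partial_x v$ is locally bounded. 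Inserting Lemma~\ref{lemma:L_p-Iv} on $Q_{\rho'}$ with parameter $\eta > 0$ bounds $\|Iv\|_{L^p(Q_{\rho'})}$ by $C\eta^{2-\alpha} F(\rho'+\eta) + C(1+\eta^{1-\alpha})(\max_{E_T}|v| + \max_{D^{\delta/4+1}\times[0,T]}|\partial_x v|)$, producing the recursion
\[
F(\rho) \leq C (\rho'-\rho)^{-2}\,\eta^{2-\alpha}\, F(\rho'+\eta) + A(\rho'-\rho, \eta),
\]
with $A$ depending only on the three data appearing in \eqref{eq:v-W^21_p-est}.

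The final step is a Giaquinta–Giusti style absorption: coupling $\eta$ to a high enough power of $\rho'-\rho$ so that the prefactor $C(\rho'-\rho)^{-2}\eta^{2-\alpha}$ is strictly less than $1/2$, subdividing $[0, \delta/4]$ into finitely many intervals, and iterating the recursion yields $F(0) \leq 2^{-N} F(\delta/4) + \frac{A}{1-1/2}$; since $v \in W^{2,1}_{p, loc}$ the quantity $F(\delta/4)$ is finite, and pushing $N$ to infinity (while keeping the total covered distance bounded by $\delta/4$) annihilates the first term, delivering \eqref{eq:v-W^21_p-est}. The main obstacle is the balance of the two small parameters $\rho'-\rho$ and $\eta$: the cutoff in the classical Calderon-Zygmund estimate produces a factor of size $(\rho'-\rho)^{-2}$ through $\partial^2_{xx}\zeta$, while Lemma~\ref{lemma:L_p-Iv} only gains $\eta^{2-\alpha}$, which is weak when $\alpha$ is close to $2$; the iteration closes only through a careful coupling of the two parameters and the essential use of the $W^{2,1}_{p, loc}$ hypothesis on $v$ to discard the residual term.
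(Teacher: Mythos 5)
Your overall strategy — localize with a cut-off, apply the parabolic Calder\'on–Zygmund estimate to $\partial_t - \L_D + r$ with $Iv$ treated as a source, invoke Lemma~\ref{lemma:L_p-Iv} to bound $\|Iv\|_{L_p}$ by $\eta^{2-\alpha}$ times the $W^{2,1}_p$ norm on a slightly enlarged cylinder plus lower-order data, and then iterate to absorb — is precisely the one used in the paper. But there are two problems in the execution as written.

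First, the recursion you display is inconsistent with your own two preceding inequalities. The cutoff singularity $(\rho'-\rho)^{-2}$ comes from $\partial^2_{xx}\zeta$, which lives only in the commutator terms involving $v$ and $\partial_x v$; the $Iv$ term carries no cutoff derivative. Substituting Lemma~\ref{lemma:L_p-Iv} therefore gives $F(\rho)\le C\eta^{2-\alpha}F(\rho'+\eta)+A(\rho'-\rho,\eta)$, with $(\rho'-\rho)^{-2}$ appearing only inside $A$, not as a factor of $F(\rho'+\eta)$; the ``balance'' you worry about is largely an artifact of this slip. Second, even with the corrected recursion, your iteration does not close as sketched: with a fixed $\eta$ small enough that $C\eta^{2-\alpha}\le\tfrac12$, each step consumes at least $\eta$ of the available width, so only finitely many steps fit into $[0,\delta/4]$ and the residual $\theta^N F(\rho_N)$ does not vanish; shrinking $\eta_i\to0$ along the iteration reintroduces the $\eta_i^{1-\alpha}$ (or $-\log\eta_i$) divergence from Lemma~\ref{lemma:L_p-Iv} inside $A_i$, turning the Giaquinta-style sum into a three-parameter balancing problem that you have not actually carried out. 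The paper's device sidesteps all of this: it couples $\eta$ to the cutoff spread (taking $\eta=\delta/4$) and multiplies the inequality through by $\delta^2$, so that with $F(\tau):=\tau^2\|v\|_{W^{2,1}_p(D^{\delta-\tau}\times(\tau,T))}$ one gets the clean recursion $F(\delta)\le 4C(\delta/4)^{2-\alpha}F(\delta/2)+K(\delta)$, where the $\delta^2$ prefactor has promoted every negative power of $\delta$ in the remainder to a nonnegative one, making $K$ bounded and increasing in $\delta$. Choosing $\delta$ once so that $4C(\delta/4)^{2-\alpha}\le\tfrac12$ and iterating the halving $\delta\mapsto\delta/2$ yields $F(\delta)\le 2K(\delta)$, and dividing by $\delta^2$ gives \eqref{eq:v-W^21_p-est}. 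You should replace the uncoupled Giaquinta scheme with this $\tau^2$-weighted one.
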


\begin{rem}
 The main idea of the following proof is to treat $Iv$ as a driving term and utilize the classical Calderon-Zygmund estimate for local PDEs. However, as we have seen in Lemma~\ref{lemma:L_p-Iv}, $W^{2,1}_p-$norm of $v$ controls $L_p-$norm of $Iv$, which in turn bounds the $W^{2,1}_p-$norm of $v$ via the Calderon-Zygmund estimate. Therefore, a careful balance between extending domains and controlling $W^{2,1}_p-$norm of $v$ needs to be maintained in the following proof. This is contrast to the case where only finite variation jumps are considered. As we have seen in Remark~\ref{remark:L-p est finite var}, $\max |\partial_x v|$ and $\max |v|$ control the $L_p-$norm of $Iv$ which bounds the $W^{2,1}_p$-norm of $v$. Hence, in the case of finite variation jumps, \eqref{eq:v-W^21_p-est} can be obtained directly from the classical Calderon-Zygmund estimate for local PDEs.
\end{rem}

\begin{proof}
 The constant $C$ denotes a generic constant throughout this proof. Domains used in this proof are displayed in Figure~1.

 For a constant $\delta\in (0,s)$ which will be determined later, let us choose a cut-off function $\zeta^{\delta}$ such that $0\leq \zeta^\delta\leq 1$, $\zeta^\delta = 1$ inside $D\times (\delta, T)$ and $\zeta^\delta =0$ outside $D^{\delta/4} \times (\delta/2, T)$.
 Moreover $\zeta^\delta$ can be chosen to satisfy
 \begin{equation}\label{eq:prop-cut-off}
  \left|\partial_{x} \zeta^{\delta}\right| \leq
  \frac{C}{\delta}, \quad \left|\partial^2_{xx}
  \zeta^{\delta}\right|\leq \frac{C}{\delta^2}, \quad \text{and}
  \quad \left|\partial_t \zeta^{\delta}\right|\leq
  \frac{C}{\delta}.
 \end{equation}
 \begin{figure}[b]\label{figure:diagrams}
\begin{minipage}{\textwidth}
\begin{center}
\caption{Domains used in this proof}
\includegraphics[width=4in, height=2.5in]{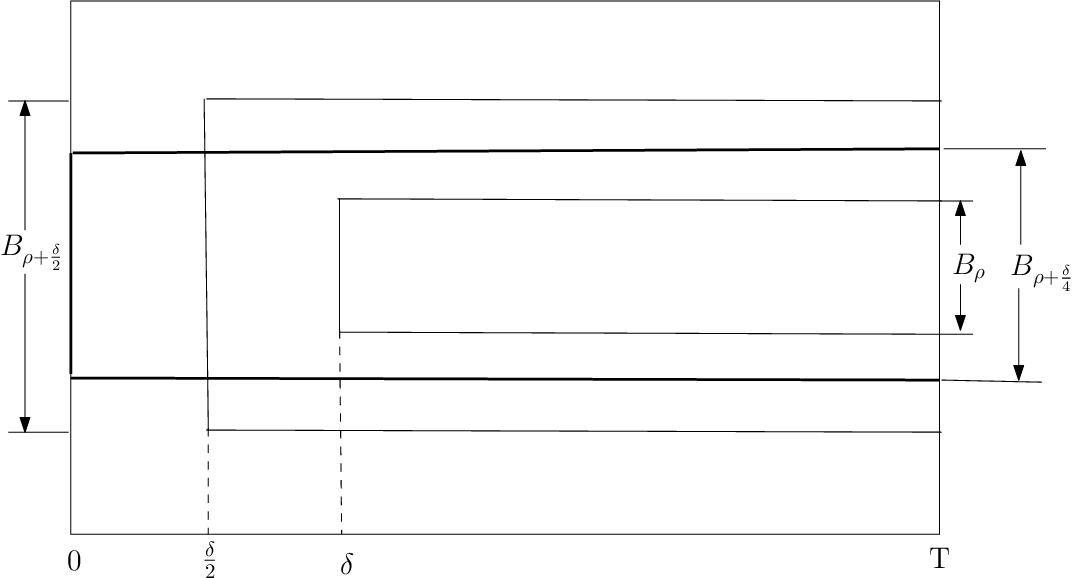}
\end{center}
\end{minipage}
\end{figure}
The function $w: = \zeta^{\delta}\, v$ satisfies
 \begin{equation*}
 \begin{split}
  & \left(\partial_t - \L_D + r\right)\, w = \zeta^{\delta}
 \, Iv(x,t) + \zeta^{\delta}\, f(x,t) + h(x,t), \quad
  (x,t)\in D^{\delta/4}\times (0,T),\\
  & w(x,t) =0, \hspace{6.4cm} (x,t)\in \partial \overline{D^{\delta/4}}
  \times [0,T),\\
  & w(x,0) = 0, \hspace{6.4cm} x\in \overline{D^{\delta/4}},
 \end{split}
 \end{equation*}
 in which $h:= \partial_t \zeta^{\delta} \, v - a \left(\partial^2_{xx} \zeta^{\delta} \, v + 2\, \partial_{x} \zeta^{\delta} \, \partial_{x} v\right) - b \, \partial_{x} \zeta^{\delta} \, v$. Appealing to
 Theorem~9.1 in \cite{lad} pp.341, we can find a constant $C$
 such that
 \begin{equation}\label{eq:w^21_p-est-1}
 \begin{split}
  \left\|w\right\|_{W^{2,1}_p (D^{\delta/4}\times
  (0,T))} \leq & C \left[\left\|\zeta^{\delta} \, Iv\right\|_{L_p} + \left\|\zeta^{\delta} \, f\right\|_{L_p} + \left\|h\right\|_{L_p}\right],
 \end{split}
 \end{equation}
 where all $L_p$-norms on the right-hand-side are taken on $D^{\delta/4}\times
  (0,T)$.

  In what follows, we will estimate the terms on the right-hand-side
  of \eqref{eq:w^21_p-est-1} respectively. First, when $\alpha\neq 1$,
  \begin{equation*}\label{eq:w^21_p-est-2}
  \begin{split}
   \left\|\zeta^{\delta} \, Iv\right\|_{L_p(D^{\delta/4}\times
  (0,T))} & \leq \left\|Iv\right\|_{L_p(D^{\delta/4}\times
  (\frac{\delta}{2},T))}\\
  & \leq C \left(\frac{\delta}{4}\right)^{2-\alpha}
  \left\|v\right\|_{W^{2,1}_p(D^{\delta/2}\times
  (\frac{\delta}{2},T))} + C \left(1+
  \left(\frac{\delta}{4}\right)^{1-\alpha}\right) \left[\max_{E_T} |v| + \max_{D^{\delta/4 +1}\times [0,T]} \left|\partial_x
  v\right|\right],
  \end{split}
  \end{equation*}
  where the first inequality follows from the choice of $\zeta^\delta$; the second inequality follows from Lemma~\ref{lemma:L_p-Iv} with $\eta=\delta/4$, $t_1=\delta/2$, and $t_2=T$. When $\alpha=1$, a similar estimate can be obtained. In that case, the rest of proof is similar to that for $\alpha\neq 1$ case, hence we only present the proof for $\alpha\neq 1$ henceforth.
  Second, it is clear that
  $
   \left\|\zeta^{\delta} \, f\right\|_{L_p(D^{\delta/4}\times
   (0,T))} \leq \left\|f\right\|_{L_p(D^{\delta/4}\times
   (\frac{\delta}{2},T))}.
  $
  Third, we will estimate the $L_p-$norm of $h$. To this end, let us derive a bound for $\|\partial_t \zeta^\delta v\|_{L_p(D^\delta \times (0,T))}$ in what follows. It follows from \eqref{eq:prop-cut-off} that
  \begin{equation*}\label{eq:w^21_p-est-4}
  \begin{split}
   \left\|\partial_t \zeta^{\delta} \, v\right\|_{L_p(D^\delta \times
   (0,T))} \leq C \max_{E_T} |v| \, \delta^{-1} Area\left(D^{\delta/4}\times(\delta/2, T)\setminus D \times (\delta, T)\right)^{\frac1p} \leq C \max_{E_T} |v| \, \delta^{\frac{1-p}{p}},
   \end{split}
  \end{equation*}
  where $Area(\cdot)$ is the Lebesgue measure. Estimates on other terms of $h$ can be performed similarly to obtain
  $$
   \|h\|_{L_p(D^{\delta/4}\times (0,T))} \leq C \left(\delta^{\frac{1-p}{p}} + \delta^{\frac{1-2p}{p}}\right)\left(\max_{E_T} |v| + \max_{D^{\delta/4}\times [0,T]} |\partial_x v|\right).
  $$
  Utilizing above estimates on the right-hand-side of \eqref{eq:w^21_p-est-1}, we obtain
  \begin{equation*}\label{eq:w^21_p-est-8}
  \begin{split}
   \left\|v\right\|_{W^{2,1}_p(D\times (\delta, T))}  \leq &
   \left\|w\right\|_{W^{2,1}_p(D^{\delta/4}\times (0,
   T))}\\
    \leq & C \left(\frac{\delta}{4}\right)^{2-\alpha}
  \left\|v\right\|_{W^{2,1}_p(D^{\delta/2}\times
  (\frac{\delta}{2},T))} + C \left(1+ \delta^{1-\alpha} + \delta^{\frac{1-p}{p}} +
  \delta^{\frac{1-2p}{p}}\right) \, \left(\max_{E_T} |v| + \max_{D^{\delta/4 +1}\times [0,T]} \left|\partial_x
  v\right|\right)\\
  & + \left\|f\right\|_{L_p(D^{\delta/4}\times (\frac{\delta}{2},T))}.
  \end{split}
  \end{equation*}
 Multiplying $\delta^2$ on both hand sides of the previous inequality,
  \begin{equation*}\label{eq:w^21_p-est-9}
  \delta^2  \left\|v\right\|_{W^{2,1}_p(D \times (\delta,
  T))} \leq 4 C \left(\frac{\delta}{4}\right)^{2-\alpha} \,
  \left(\frac{\delta}{2}\right)^2 \left\|v\right\|_{W^{2,1}_p(D^{\delta/2}\times (\frac{\delta}{2},
  T))} + K(\delta),
  \end{equation*}
  where
  $
   K(\delta) = C \left(\delta^2+ \delta^{3-\alpha} + \delta^{\frac{1+p}{p}} +
  \delta^{\frac{1}{p}}\right) \left(\max_{E_T} |v| + \max_{D^{\delta/4 +1} \times [0,T]} \left|\partial_x
  v\right|\right)+ \delta^2 \left\|f\right\|_{L_p(D^{\delta/4}\times
   (\frac{\delta}{2},T))}
  $. Denote $F(\tau) := \tau^2 \left\|v\right\|_{W^{2,1}_p(D^{\delta-\tau}\times (\tau,
  T))}$. The previous inequality gives the following recursive inequality
  \begin{equation*}
   F(\delta) \leq 4 C \left(\frac{\delta}{4}\right)^{2-\alpha}
   F\left(\frac{\delta}{2}\right) + K(\delta).
  \end{equation*}
  Now choosing a sufficiently small $\delta \in (0,s)$ such that $4 C \left(\delta/4\right)^{2-\alpha} \leq \frac12$, we obtain from the above inequality that
  \begin{equation*}
   F(\delta) \leq \frac12 \,F\left(\frac{\delta}{2}\right) + K(\delta).
  \end{equation*}
   Note that $F(\delta)$ is finite for any $\delta$, since the $W^{2,1}_p-$norm of $v$ is finite in any compact domain of $\R\times (0,T)$, and $K(\delta)$ is increasing in $\delta$. We then obtain from iterating the previous inequality that
  \[
   F(\delta) \leq \sum_{i=0}^{\infty} \frac{1}{2^i}\,
   K\left(\frac{\delta}{2^i}\right) \leq \sum_{i=0}^{\infty}
   \frac{1}{2^i}\, K(\delta) = 2\, K(\delta).
  \]
  In terms of $W^{2,1}_{p, loc}-$norms, the previous inequality reads
  \begin{equation*}
  \begin{split}
   \left\|v\right\|_{W^{2,1}_p (D\times (s,T))} & \leq 2 \, C \left[1+ \delta^{1-\alpha} + \delta^{\frac{1-p}{p}} +
  \delta^{\frac{1-2p}{p}}\right] \, \left[\max_{E_T} |v| + \max_{D^{\delta/4 +1}\times [0,T]} \left|\partial_x
  v\right|\right]+ 2\, \left\|f\right\|_{L_p(D^{\delta/4}\times
   (\frac{\delta}{2},T))}\\
   &\leq C_{\delta} \left[\max_{E_T} |v| + \max_{D^{\delta/4+1}\times [0,T]} \left|\partial_x
  v\right| + \left\|f\right\|_{L_p(D^{\delta/4}\times (\frac{\delta}{2},T))}\right].
  \end{split}
  \end{equation*}
\end{proof}

\section{Proof of main results}\label{sec:proofs}
\subsection{The penalty method} We use the penalty method (see e.g. \cite{friedman-freeboundary} and \cite{yang}) to analyze the following variational inequality:
\begin{equation}\label{eq: VI}
\begin{split}
 &\min\left\{\left(\partial_t - \L_{D} - I + r\right) v,
 v-g\right\} =0, \hspace{1cm} (x,t)\in \R\times (0,T], \\
 & v(x,0) = g(x), \hspace{4.6cm} x\in \R.
\end{split}
\end{equation}
The nonlocal integral term introduces several technical difficulties in applying the penalty method.
In this section, we will focus on the case where $X$ has infinite variation jumps, i.e., Assumption~\ref{ass: Levy measure} holds with $1\leq \alpha<2$. When $X$ has finite variation jumps, i.e., $0\leq \alpha<1$, the integral operator has the reduced form $I^f$ in \eqref{eq:def-If}, see Remark \ref{rem: finite var}. Then all proofs are similar but easier than those in the infinite variation case.

For each $\epsilon\in(0,1)$, consider the following penalty problem:
\begin{equation}\label{eq:penalty-v-eps}
\begin{split}
 & \left(\partial_t - \L_{D} - I + r\right) v^{\epsilon} +
 p_{\epsilon} \left(v^{\epsilon} - g^{\epsilon}\right) = 0, \hspace{1cm}
 (x,t)\in \R \times (0,T],\\
 & v^{\epsilon}(x,0) = g^{\epsilon}(x), \hspace{4.6cm} x\in \R,
\end{split}
\end{equation}
Here $\left\{g^{\epsilon}\right\}_{\epsilon\in(0,1)}$ is a mollified sequence of $g$ such that $\partial^2_{xx} g^\epsilon(x) \geq -J$, $0\leq g\leq K$, and $|(g^\epsilon)'(x)|\leq L$ for any $x\in \R$; see \cite{friedman-freeboundary} pp.27 for its construction. The mollified sequence can be chosen such that constants $J, K$, and $L$, appearing in Assumption~\ref{ass: payoff}, are independent of $\epsilon$. The penalty term $p_{\epsilon}(y)\in
 C^{\infty}(\R)$ is chosen to satisfy
 following properties:
 \begin{equation}\label{eq:prop-p-eps}
 \begin{split}
 &(i)\, p_{\epsilon}(y) \leq 0, \quad  (ii)\, p_{\epsilon}(y)=0
 \text{  for } y\geq \epsilon, \quad (iii)\, p_{\epsilon}(0) = - a^{(0)} J - |b|^{(0)} L - r^{(0)} K - J \int_{|y|\leq 1} |y|^2 \nu(dy) - K \int_{|y|>1} \nu(dy),\\
 &(iv)\, p^{'}_{\epsilon}(y) \geq 0, \quad   (v)\, p^{''}_{\epsilon}(y)
 \leq 0, \quad \text{ and } \quad (vi) \, \lim_{\epsilon\downarrow 0}
 p_{\epsilon}(y) = \left\{\begin{array}{ll}0, & y>0\\ -\infty, &
 y<0\end{array}\right.,
 \end{split}
 \end{equation}
 where $a^{(0)}= \max_{E_T}a$, $|b|^{(0)}=\max_{E_T}|b|$, and $r^{(0)} = \max_{E_T}r$ are finite thanks to Assumption~\ref{ass: coeff}. Indeed, $p_\epsilon$ can be chosen as a smooth mollification of the function $\min\{-2 p_{\epsilon}(0) x/\epsilon + p_\epsilon(0), 0\}$.

 Now we show that each penalty problem \eqref{eq:penalty-v-eps} has a classical solution. To this end, let us first recall  the Schauder fixed point theorem (see Theorem 2 in \cite{friedman} pp. 189).
 \begin{lem}\label{lemma:schauder-fixed-pt}
  Let $\Theta$ be a closed convex subset of a Banach space and let
  $\T$ be a continuous operator on $\Theta$ such that $\T\Theta$ is
  contained in $\Theta$ and $\T\Theta$ is precompact. Then $\T$ has
  a fixed point in $\Theta$.
 \end{lem}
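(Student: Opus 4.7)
The plan is to reduce this to the finite-dimensional Brouwer fixed point theorem via Schauder's classical projection scheme. First I would exploit precompactness of $\T\Theta$: for each $\eps>0$, pick a finite $\eps$-net $\{x_1^\eps,\ldots,x_{n_\eps}^\eps\}\subset \T\Theta\subset\Theta$. Setting $m_i(y):=\max\{\eps-\|y-x_i^\eps\|,0\}$, I would define the Schauder projection
\[
 P_\eps(y):=\frac{\sum_{i=1}^{n_\eps} m_i(y)\, x_i^\eps}{\sum_{i=1}^{n_\eps} m_i(y)}, \qquad y\in \T\Theta.
\]
This map is continuous on $\T\Theta$, obeys $\|P_\eps(y)-y\|\le \eps$, and takes values in the finite-dimensional compact convex set $K_\eps:=\mathrm{conv}\{x_1^\eps,\ldots,x_{n_\eps}^\eps\}\subset\Theta$, where the inclusion uses convexity of $\Theta$.

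Next, I would consider the continuous self-map $P_\eps\circ \T: K_\eps\to K_\eps$. Applying Brouwer's theorem inside the finite-dimensional affine span of $K_\eps$ produces a point $x_\eps\in K_\eps$ with $P_\eps(\T x_\eps)=x_\eps$, and hence $\|\T x_\eps - x_\eps\|\le \eps$.

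To close the argument, choose $\eps_k\downarrow 0$. Precompactness of $\T\Theta$ lets me pass to a subsequence (not relabeled) along which $\T x_{\eps_k}\to x^*$ for some $x^*$; since $\T\Theta\subset\Theta$ and $\Theta$ is closed, $x^*\in\Theta$. The estimate above forces $x_{\eps_k}\to x^*$, and continuity of $\T$ then yields $\T x_{\eps_k}\to \T x^*$. Uniqueness of limits gives $\T x^*=x^*$.

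The only delicate bookkeeping is verifying that $P_\eps$ is genuinely well-defined: every $y\in\T\Theta$ lies in at least one ball $B(x_i^\eps,\eps)$, so the denominator $\sum_i m_i(y)$ is strictly positive throughout $\T\Theta$. Otherwise the argument is entirely standard; since this is Schauder's classical theorem quoted verbatim from Friedman, I do not expect any new obstacle, and the authors may well omit the proof altogether.
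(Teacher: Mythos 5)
Your proof is the standard Schauder projection argument (finite $\eps$-net, projection onto its convex hull, Brouwer's theorem, then a limiting argument exploiting precompactness and continuity) and it is correct. The paper itself gives no proof — it merely recalls the theorem with a citation to Friedman, p.~189 — and the argument you give is precisely the one found there, so your anticipation that the authors omit the proof is accurate.
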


 \begin{lem}\label{lemma:existence-penalty-prob}
  Let Assumptions~\ref{ass: coeff}, \ref{ass: Levy measure} with $1\leq \alpha<2$, and  \ref{ass: payoff} hold. Then for any $\epsilon\in (0,1)$ and $\beta\in (\alpha, 2)$, \eqref{eq:penalty-v-eps} has a solution $v^{\epsilon}\in H^{2+\beta-\alpha-\gamma, 1+
  \frac{\beta-\alpha-\gamma}{2}}(E_T)$. Here $\gamma=0$ when $1<\alpha<2$; $\gamma$ is an arbitrary number in $(0, \beta-\alpha)$ when $\alpha =1$.
 \end{lem}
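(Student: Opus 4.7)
\smallskip\noindent\textbf{Proof proposal.}
The plan is to apply Schauder's fixed-point theorem (Lemma~\ref{lemma:schauder-fixed-pt}) to an operator $\T$ that decouples the nonlocal integral from the local parabolic part of \eqref{eq:penalty-v-eps}. Fix $\beta\in(\alpha,2)$ and write $\beta':=\beta-\alpha-\gamma$. For $\tilde v$ in a closed convex subset $\Theta\subset H^{\beta,\beta/2}(E_T)$ to be specified, define $\T\tilde v:=v$ as the unique classical solution of the Cauchy problem
\[
(\partial_t-\L_D+r)\,v+p_\epsilon(v-g^\epsilon)=I\tilde v \ \text{on}\ \R\times(0,T],\qquad v(\cdot,0)=g^\epsilon,
\]
which is linear in the leading part and semilinear through the smooth Lipschitz nonlinearity $p_\epsilon(v-g^\epsilon)$. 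Lemma~\ref{lemma:infinite-var-holder}(ii) gives $I\tilde v\in H^{\beta',\beta'/2}(E_T)$ with norm bounded by $\|\tilde v\|^{(\beta)}_{E_T}$. Combined with Assumption~\ref{ass: coeff} on the coefficients, parabolic Schauder theory for the Cauchy problem on $\R\times(0,T]$ (cf.\ Theorem~5.1 of Ch.~IV in \cite{lad}, together with a standard contraction/linearization to absorb the zero-order nonlinearity $p_\epsilon$) provides a unique $v\in H^{2+\beta',1+\beta'/2}(E_T)$ and a Schauder bound on $\|v\|^{(2+\beta')}_{E_T}$ in terms of the data.

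For the invariance $\T\Theta\subset\Theta$, take $\Theta$ to be the closed convex set of $\tilde v\in H^{\beta,\beta/2}(E_T)$ satisfying $0\le\tilde v\le K$, $\sup_{E_T}|\partial_x\tilde v|\le L$, a uniform $1/2$-H\"older modulus in $t$, and $\|\tilde v\|^{(\beta)}_{E_T}\le M$ for $M$ sufficiently large. The $L^\infty$ bound $0\le v\le K$ comes from the maximum principle applied to $v$ and $K-v$ together with properties (i)--(iii) of $p_\epsilon$: the specific value of $p_\epsilon(0)$ is calibrated exactly so that $g^\epsilon$ is a sub-solution of \eqref{eq:penalty-v-eps}, using $\partial^2_{xx}g^\epsilon\ge-J$ and the pointwise bound on $|\L g^\epsilon|$ obtained from the Taylor decomposition underlying \eqref{eq:Isubeps}. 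The Lipschitz-in-$x$ and $1/2$-H\"older-in-$t$ estimates on $v$ follow by differentiating the equation (or taking difference quotients in $t$) and applying the maximum principle, using $p_\epsilon'\ge0$ from (iv) to preserve the comparison after linearization. The H\"older bound $\|v\|^{(\beta)}_{E_T}\le M$ is then closed via the Schauder estimate after choosing $M$ large enough to absorb the feedback through $\|I\tilde v\|^{(\beta')}_{E_T}$. Continuity of $\T$ on $\Theta$ in the $H^{\beta,\beta/2}$-topology is inherited from linearity of $I$, the estimate of Lemma~\ref{lemma:infinite-var-holder}(ii) applied to differences, and Schauder stability for the linear Cauchy problem. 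Precompactness of $\T\Theta$ in $H^{\beta,\beta/2}(E_T)$ follows from its uniform bound in the strictly finer space $H^{2+\beta',1+\beta'/2}$ via a standard Arzel\`a--Ascoli/diagonal argument, with the uniform H\"older moduli built into $\Theta$ handling the unbounded $x$-direction. Lemma~\ref{lemma:schauder-fixed-pt} then yields a fixed point $v^\epsilon=\T v^\epsilon$ which, by construction, is a classical solution of \eqref{eq:penalty-v-eps} of the claimed regularity.

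The main obstacle will be the a priori bound step: threading the $L^\infty$, Lipschitz, and $1/2$-H\"older estimates through the nonlocal equation is delicate, since $I$ does not interact with differentiation and the classical maximum principle as cleanly as $\L_D$ does. The constants $K,L,J$ from Assumption~\ref{ass: payoff} and the tailored choice of $p_\epsilon(0)$ in property (iii) have been designed exactly so that each comparison step in that argument closes; verifying this carefully, and ensuring that the unbounded domain $E_T$ does not spoil the Schauder and Arzel\`a--Ascoli applications, is where the bulk of the technical work lies.
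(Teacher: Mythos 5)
Your overall architecture — Schauder fixed point (Lemma~\ref{lemma:schauder-fixed-pt}), decoupling the nonlocal term $I\tilde v$ from the local parabolic part, and closing the loop via Lemma~\ref{lemma:infinite-var-holder}(ii) and parabolic Schauder theory — is the same as the paper's. But there is a genuine gap in the step where you close the invariance $\T\Theta\subset\Theta$.

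You claim that the H\"older bound $\|v\|^{(\beta)}_{E_T}\le M$ ``is then closed via the Schauder estimate after choosing $M$ large enough.'' This cannot work on the full interval $E_T$. The Schauder estimate gives $\|v\|^{(2+\beta')}_{E_T}\le C_S\bigl(\|I\tilde v\|^{(\beta')}_{E_T}+\text{data}\bigr)$, and Lemma~\ref{lemma:infinite-var-holder}(ii) gives $\|I\tilde v\|^{(\beta')}_{E_T}\le C_I\|\tilde v\|^{(\beta)}_{E_T}\le C_I M$. Therefore $\|v\|^{(\beta)}_{E_T}\lesssim C_S C_I M+C_S\cdot\text{data}$: the feedback is \emph{linear} in $M$, so enlarging $M$ enlarges the right-hand side proportionally, and there is no reason to expect $C_S C_I<1$. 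The fixed-point argument only closes if the constant multiplying $\|\tilde v\|^{(\beta)}$ is small. The paper achieves this by working on a short time interval $[0,s]$ and invoking the estimate of Lemma~2 in \cite{friedman} pp.\,193, which carries a factor $s^{\xi}$ with $\xi=(2-\beta)/2>0$; choosing $s$ so that $A_\beta C s^{\xi}<1/2$ makes the map a half-contraction in norm, and the solution is then pushed from $[0,s]$ to $[0,T]$ by restarting at $t=s$, which works because $s$ depends only on $\beta$ and the constant $C$ of Lemma~\ref{lemma:infinite-var-holder}(ii), not on the size of the data at time $s$. Without this short-time mechanism (or some other source of smallness), your invariance argument does not close.

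Two smaller but real issues. First, you place the penalty term $p_\epsilon(v-g^\epsilon)$ on the \emph{new} iterate, making the Cauchy problem semilinear and requiring an extra inner linearization; the paper evaluates $p_\epsilon$ at the \emph{old} iterate $v$ (i.e.\ the right-hand side is $Iv-p_\epsilon(v-g^\epsilon)+(\L_D-r)g^\epsilon$), so that each iteration is a purely linear Cauchy problem and Theorem~5.1 of \cite{lad} applies directly. Second, you build pointwise constraints $0\le\tilde v\le K$, $|\partial_x\tilde v|\le L$, and a $1/2$-H\"older-in-$t$ modulus into $\Theta$ and claim to propagate them by the maximum principle. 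With your iteration scheme, $I\tilde v$ is an arbitrary frozen driving term with $\|I\tilde v\|_\infty\lesssim M$; it no longer has the structure of $Iv$ acting on the unknown, so the nonlocal maximum principle (the analogue of Lemma~\ref{lemma:maximum-principle}) does not apply and the bounds do not propagate from $\tilde v$ to $v=\T\tilde v$. The paper avoids this by taking $\Theta$ to be simply a H\"older ball, and establishes the a priori bounds on $v^\epsilon$ ({\it a posteriori}, about the fixed point) in Lemmas~\ref{lemma:v-eps-bounded} and \ref{lemma:v-x-eps-bounded}.
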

 \begin{proof}
  We will first prove the existence on a sufficiently small time interval
  $[0,s]$ via the Schauder fixed point theorem, then extend this solution to the interval $[0,T]$.

  Let us consider the set $\Theta := \left\{v \in H^{\beta, \frac{\beta}{2}}(E_s) \text{ with its H\"{o}lder norm } \|v\|^{(\beta)}_{E_s} \leq
  U_0\right\}$, where $s$ and $U_0$ will be
  determined later. It is clear that $\Theta$ is a bounded, closed
  and convex set in the Banach space $H^{\beta,
  \frac{\beta}{2}}(E_s)$. For any $v\in \Theta$, consider the
  following Cauchy problem for $u-g^{\epsilon}$:
  \begin{equation}\label{eq:def-T}
  \begin{split}
   &\left(\partial_t - \L_{D} + r\right) (u - g^{\epsilon}) =
   Iv - p_{\epsilon} (v-g^{\epsilon}) + (\L_{D} - r)\,
   g^{\epsilon},  \quad (x,t)\in \R\times (0,s],\\
   &u(x,0) - g^{\epsilon}(x) = 0, \hspace{5.9cm} x\in \R.
  \end{split}
  \end{equation}
  We define an operator $\T$ via $u = \T v$ using the solution $u$ of \eqref{eq:def-T}. Let us check the conditions for the
  Schauder fixed point theorem are satisfied in the following four steps:

  \textit{\underline{Step 1}: $Tv$ is well defined.}
  Since $v\in H^{\beta, \frac{\beta}{2}}(E_s)$ with $\beta\in (\alpha,
  2)$, Lemma~\ref{lemma:infinite-var-holder} part (ii) implies that
  $Iv\in H^{\beta-\alpha-\gamma, \frac{\beta-\alpha-\gamma}{2}}(E_s)$
  with
  $
   \left\|Iv\right\|^{(\beta-\alpha-\gamma)}_{E_s} \leq C \,
   \|v\|^{(\beta)}_{E_s}
  $.
  On the other hand, using properties of $v$, $g^\epsilon$ and $p_\epsilon$, one can check that $-p_\epsilon (v-g^\epsilon) + (\L_D-r) g^\epsilon \in H^{\beta-\alpha-\gamma, \frac{\beta-\alpha-\gamma}{2}}(E_s)$. Therefore, Theorem 5.1 in \cite{lad} pp. 320 implies that
  \eqref{eq:def-T} has a unique solution $u-g^{\epsilon} \in H^{2+\beta-\alpha-\gamma, 1+
  \frac{\beta-\alpha-\gamma}{2}}(E_s)$. Hence $u = Tv \in H^{2+\beta-\alpha-\gamma, 1+
  \frac{\beta-\alpha-\gamma}{2}}(E_s)$, since $g^\epsilon$ is smooth.

  \textit{\underline{Step 2}. $\T\Theta \subset \Theta$.} It follows from
  Lemma 2 in \cite{friedman} pp. 193 that there exists
  a positive constant $A_{\beta}$, depending on $\beta$, such that
  \begin{equation}\label{eq:beta-norm-est}
  \begin{split}
   \left\|u-g^{\epsilon}\right\|^{(\beta)}_{E_s} & \leq A_{\beta} s^{\xi}
   \left[\|Iv\|^{(0)}_{E_s} + \|p_{\epsilon}(v-g^{\epsilon})\|^{(0)}_{E_s} + \left\|(\L_{D}-r)\,
   g^{\epsilon}\right\|^{(0)}_{E_s}\right] \\
   & \leq A_{\beta} C s^{\xi} \|v\|^{(\beta)}_{E_s} + \widetilde{A},
  \end{split}
  \end{equation}
  where $\xi = \frac{2-\beta}{2}$, $C$ is the constant in Step 1, and $\widetilde{A}$ is a
  sufficiently large constant. Let $s$ be such that $\tau := A_{\beta} C s^{\xi}
  <1/2$ and let $U_0 := \max\{ \frac{2\widetilde{A}}{1-2\tau},
  2\,\|g^{\epsilon}\|^{(\beta)}_{E_s}\}$. Since $\|v\|^{(\beta)}_{E_s}\leq
  U_0$, it then follows from \eqref{eq:beta-norm-est} that
  \begin{equation}\label{eq:u_leq_U0}
   \|u\|^{(\beta)}_{E_s} \leq \|u-g^{\epsilon}\|^{(\beta)}_{E_s} +
   \|g^{\epsilon}\|^{(\beta)}_{E_s} \leq \tau U_0 + \widetilde{A} +
   \frac{U_0}{2} \leq \tau\, U_0 + \frac{1-2\tau}{2}\, U_0 + \frac{U_0}{2} = U_0.
  \end{equation}
  This confirms that $u=\T v\in \Theta$.

  \textit{\underline{Step 3}. $\T \Theta$ is a precompact subset of $H^{\beta,
  \frac{\beta}{2}}(E_s)$.} For any $\eta\in (\beta, 2)$, an estimate similar
  to \eqref{eq:beta-norm-est} shows that for any $v\in \Theta$,  $\|Tv\|^{(\eta)}_{E_s}
  \leq U_1$ for some constant $U_1$ depending on $U_0$ and $s$. Since bounded subsets of $H^{\eta,
  \frac{\eta}{2}}(E_s)$ are precompact subsets of
  $H^{\beta,\frac{\beta}{2}}(E_s)$ (see Theorem 1 in
  \cite{friedman} pp.188), then $\T \Theta$ is a
  precompact subset in $H^{\beta,\frac{\beta}{2}}(E_s)$.

  \textit{\underline{Step 4}. $\T$ is a continuous operator.} Let $v_n$ be a sequence
  in $\Theta$ such that $\lim_{n\rightarrow \infty} \|v_n-v\|^{(\beta)}_{E_s} = 0$, we will
  show $\lim_{n\rightarrow \infty} \|Tv_n-Tv\|^{(\beta)}_{E_s} = 0$.
  From \eqref{eq:def-T}, $w\triangleq Tv_n - Tv$ satisfies the
  Cauchy problem
  \begin{equation*}
  \begin{split}
   & \left(\partial_t - \L_{D} + r\right) w  = I (v_n-v)
    - \left[p_{\epsilon}(v_n - g^{\epsilon}) -
   p_{\epsilon}(v-g^{\epsilon})\right], \quad (x,t)\in \R
   \times(0,s],\\
   & w(x,0) = 0, \hspace{7.4cm} x\in \R.
  \end{split}
  \end{equation*}
  It follows again from Lemma~2 in \cite{friedman} pp. 193 that
  \begin{equation*}
  \begin{split}
   \|\T v_n - \T v\|^{(\beta)}_{E_s} = \|w\|^{(\beta)}_{E_s} &\leq A_{\beta} s^{\gamma}
   \left[\|I(v_n-v)\|^{(0)}_{E_s} + \left\|p_{\epsilon}(v_n-g^{\epsilon}) - p_{\epsilon}(v-g^{\epsilon})\right\|
   ^{(0)}_{E_s}\right]\\
   & \leq A_{\beta} s^{\gamma} \left[ C \|v_n-v\|^{(\beta)}_{E_s} +
   \max_{E_s, n} \left|p_{\epsilon}^{'} (v_n-g^{\epsilon})\right|
   \|v_n - v\|^{(0)}_{E_s}\right] \rightarrow 0 \quad \text{ as }
   n\rightarrow \infty.
  \end{split}
  \end{equation*}

  Now all conditions of the Schauder fixed point theorem are checked, hence
  $\T$ has a fixed point in $H^{\beta, \frac{\beta}{2}}(E_s)$, which is denoted
  by $v^{\epsilon}$. Moreover, it follows from results in
  Step 1 that $v^{\epsilon} = \T v^{\epsilon} \in H^{2+\beta-\alpha-\gamma, 1+
  \frac{\beta-\alpha-\gamma}{2}}(E_s)$.

  Finally, let us extend $v^{\epsilon}$ to the interval $[0,T]$. We can replace
  $g^{\epsilon}(\cdot)$ by $v^{\epsilon}(\cdot, s)$ in \eqref{eq:def-T}, since $\|v^{\epsilon}(\cdot,
  s)\|_{\R}^{(2+\beta-\alpha-\gamma)}$ is finite thanks to the result after Step 4 and because the choice of $s$ in Step 2 only depends on $\beta$ and
  $C$. If we choose a sufficiently large $U_0$, depending on $\|v^{\epsilon}(\cdot,
  s)\|_{\R}^{(2+\beta-\alpha-\gamma)}$, such that
  \eqref{eq:u_leq_U0} holds on $[s, 2s]$, then $\|v^{\epsilon}(\cdot,
  2s)\|_{\R}^{(2+\beta-\alpha-\gamma)}$ is finite thanks to the argument after Step 4. Now one can repeat this procedure to extend the time interval by $s$ each
  time, until it contains $[0,T]$.
 \end{proof}

 After the existence of classical solutions for \eqref{eq:penalty-v-eps} is established, we will study properties of the sequence $(v^\epsilon)_{\epsilon\in(0,1)}$ in the rest of this subsection. The following maximum principle is a handy tool in our analysis.

 \begin{lem}\label{lemma:maximum-principle}
 Suppose that $a>0$, $a$ and $b$ are bounded and the Levy measure $\nu$ satisfies $\int_{|y|>1} |y| \nu(dy) <\infty$. Assume also that we are given a function $c$ bounded from below on $E_T$. If $v\in C^0(E_T) \cap C^{2,1}(E_T)$ satisfies $\left(\partial_t - \L_{D} - I + c\right) v(x,t) \geq 0$ and $v$ is bounded from below on $E_T$, then $v(x,0)\geq 0$ for $x\in \R$ implies that $v\geq 0$ on $E_T$.
\end{lem}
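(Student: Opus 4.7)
The proof will be a standard parabolic maximum principle argument adapted to the nonlocal setting. The three main ingredients are: a multiplicative reduction to obtain a strictly positive zeroth-order coefficient; a linear-growth barrier in $x$ to localize a hypothetical negative minimum on the unbounded spatial domain; and the elementary observation that at a global minimum the nonlocal operator $I$ has a nonnegative sign.

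First, since $c$ is bounded below on $E_T$, I choose $\lambda>0$ so that $c+\lambda\geq c_0>0$ on $E_T$. Setting $w:=e^{-\lambda t}v$, the hypothesis becomes $(\partial_t - \L_D - I + c + \lambda)w\geq 0$, and $w$ remains continuous, $C^{2,1}$, bounded from below, and nonnegative at $t=0$. I then introduce the barrier $\psi(x):=(1+x^2)^{1/2}$, which is smooth with bounded first and second derivatives, and consider
\[
 w_\delta(x,t) := w(x,t) + \delta\, e^{\mu t}\bigl(1+\psi(x)\bigr), \qquad \delta>0,\ \mu>0,
\]
with $\mu$ to be determined. Since $\psi',\psi''$ are bounded and $a,b$ are bounded, $\L_D\psi$ is bounded on $E_T$. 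Splitting $I\psi$ into $|y|\leq 1$ and $|y|>1$ and using $|h|\leq|y|$, the small-jump part is controlled by $\tfrac12\|\psi''\|_\infty\int_{|y|\leq 1}y^2\nu(dy)$ via Taylor expansion, while the large-jump part is controlled by $\|\psi'\|_\infty\int_{|y|>1}|y|\nu(dy)$; both are finite by the hypotheses on $\nu$. Hence $|I\psi|$ is bounded on $E_T$. Using $\L_D(1)=0$ and $I(1)=0$, a direct computation gives
\[
 (\partial_t - \L_D - I + c + \lambda)w_\delta \;\geq\; \delta\, e^{\mu t}\bigl[(\mu + c_0)(1+\psi) - \L_D\psi - I\psi\bigr],
\]
which is strictly positive on $E_T$ once $\mu$ is chosen large enough in terms of $c_0$ and the uniform bounds on $\L_D\psi,I\psi$. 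Also $w_\delta(x,0) = v(x,0) + \delta(1+\psi(x))\geq \delta>0$.

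I then argue by contradiction that $w_\delta\geq 0$ on $E_T$. Suppose $\inf_{E_T} w_\delta<0$. Lower boundedness of $w$ together with $w_\delta(x,t)\to +\infty$ as $|x|\to\infty$ uniformly in $t\in[0,T]$ imply that the infimum is attained at some $(x_0,t_0)\in\R\times[0,T]$; the initial bound forces $t_0>0$. At this global minimum, $\partial_t w_\delta(x_0,t_0)\leq 0$ (one-sided if $t_0=T$), $\partial_x w_\delta(x_0,t_0)=0$, and $\partial^2_{xx}w_\delta(x_0,t_0)\geq 0$, so $\L_D w_\delta(x_0,t_0)\geq 0$ since $a>0$. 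Because $\partial_x w_\delta$ vanishes at $(x_0,t_0)$, the compensator in the definition of $I$ drops out and minimality gives
\[
 Iw_\delta(x_0,t_0) = \int_{\R}\bigl[w_\delta(x_0+h,t_0)-w_\delta(x_0,t_0)\bigr]\,\nu(dy)\;\geq\;0,
\]
the integral being finite because $w_\delta\in C^{2,1}$ near $(x_0,t_0)$ (so the $|y|\leq 1$ integrand is $O(y^2)$) and $w_\delta$ has at most linear growth in $x$ (so the $|y|>1$ integrand is $O(|y|)$, which is $\nu$-integrable by hypothesis). Combined with $c+\lambda>0$ and $w_\delta(x_0,t_0)<0$, this yields
\[
 (\partial_t - \L_D - I + c + \lambda)w_\delta(x_0,t_0)\;\leq\;(c+\lambda)(x_0,t_0)\, w_\delta(x_0,t_0)\;<\;0,
\]
contradicting the strict positivity above. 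Hence $w_\delta\geq 0$ on $E_T$, and letting $\delta\downarrow 0$ gives $w\geq 0$, so $v\geq 0$.

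The main obstacle is the barrier construction on the unbounded domain: $\psi$ must grow fast enough that $w_\delta$ is coercive in $x$ (so the pointwise minimum is attained, giving the argument a place to act), while also ensuring that both $\L_D\psi$ and $I\psi$ remain uniformly bounded so that the reduced equation's strict positivity is preserved. Linear growth $\psi(x)=(1+x^2)^{1/2}$ is essentially the fastest growth consistent with the single moment hypothesis $\int_{|y|>1}|y|\nu(dy)<\infty$, which is precisely why this condition appears in the statement of the lemma.
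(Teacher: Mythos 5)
Your proof is correct and follows essentially the same strategy as the paper: construct a linear-growth spatial barrier (whose image under $I$ stays bounded precisely because $\int_{|y|>1}|y|\,\nu(dy)<\infty$), add it so that the operator applied to the sum is strictly positive, derive a contradiction at a hypothetical negative global minimum by combining $\partial_t\le 0$, $\mathcal{L}_D\ge 0$, $I\ge 0$, and the sign of the zeroth-order term, and then send the auxiliary parameter to its limit. The only cosmetic differences are that you normalize the zeroth-order coefficient via the multiplicative substitution $e^{-\lambda t}v$ rather than the paper's additive shift $c\mapsto c+C_0$, and you make the barrier coercive and scale it by $\delta\downarrow 0$ rather than using the paper's truncation radius $R_0\uparrow\infty$.
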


\begin{proof}
 Let $v\geq -m$ and $c \geq -C_0$ on $E_T$ for some positive constants $m$ and $C_0$. For any positive $R_0$, consider the following function:
 \[
  w(x,t) := \frac{m}{f(R_0)} \left(f(|x|) + C_1 t\right) e^{C_0 t}, \quad (x,t)\in E_T,
 \]
 where $C_1$ will be determined later and $f: \R_+ \rightarrow \R_+$ is an increasing $C^2$ function such that $f=0$ in a neighborhood of $0$ and $f(R) =\frac{R^2}{1+ R}$ for sufficiently large $R$. It is clear that $\lim_{R\rightarrow +\infty} f(R) =\infty$ and derivatives $f^{'}$ and $f^{''}$ are bounded. Then $If(|x|)$ is bounded on $\R$. Indeed, there exists a constant $C$ such that
 \[
 \begin{split}
  \big| I f(\left|x\right|)\big| &  \leq \int_{|y|\leq 1} \nu(dy)\, \int_0^1 dz \, (1-z)
  \,h^2 \left|\partial^2_{xx} f(\left|x+z
  h\right|)\right| + \int_{|y|>1} \nu(dy) \left|f(\left|x+h\right|) -
  f(\left|x\right|)\right| \\
  & \leq C\left(\int_{|y|\leq 1} y^2 \nu(dy) + \int_{|y|>1} |y|\,
  \nu(dy)\right) <+\infty.
 \end{split}
 \]
 Combining above estimate with $c+C_0 \geq 0$, one can find a sufficient large constant $C_1$ such that
 \[
  (\partial_t - \L_D - I + c) w = e^{C_0 t} \frac{m}{f(R_0)} \left[C_1 + (c+C_0)(f(|x|)+ C_1 t) - a \,\partial^2_{xx} f(|x|) - b\, \partial_x f(|x|) - If(|x|)\right]>C_0 m, \quad \text{ on } E_T.
 \]
Now define $\tilde{v}:= v+w$. The previous estimate gives
 \begin{equation}\label{eq: est geq 0}
  (\partial_t - \L_D - I + c+C_0) \tilde{v} > C_0 v + C_0 m \geq 0, \quad \text{ for any } (x,t)\in E_T.
 \end{equation}
 On the other hand, $\tilde{v}(x,0) = \frac{m}{f(R_0)} f(|x|) + v(x,0) \geq 0$ due to $v(x,0) \geq 0$, moreover $\tilde{v}(x,t) \geq m + v(x,t)\geq 0$ for $|x| \geq R_0$ because $f$ is increasing and $v\geq -m$. Therefore, we claim that $\tilde{v}\geq 0$ for $(x,t)\in [-R_0, R_0] \times [0, T]$. Indeed, if there exists $(x,t)\in [-R_0, R_0] \times (0,T]$ such that $\tilde{v}(x,t)<0$, $\tilde{v}$ must take its negative minimum at some point $(x_0,t_0) \in [-R_0, R_0]\times (0, T]$. Note that this is also a global minimum for $\tilde{v}$ on $E_T$, hence $I\tilde{v}(x_0, t_0) \geq 0$, $\partial_t \tilde{v}(x_0, t_0)\leq 0$, $\partial_x \tilde{v}(x_0, t_0)=0$, $\partial^2_{xx} \tilde{v}(x_0, t_0)\geq 0$, and $(c+C_0) \tilde{v}(x_0, t_0)\leq 0$. As a result,
 $
  (\partial_t - \L_D - I +c+C_0) \tilde{v}(x_0, t_0) \leq 0,
 $
 which contradicts with \eqref{eq: est geq 0}. Now for fixed point $(x,t)$, the statement follows from sending the constant $R_0$ in $\tilde{v}$  to $\infty$.
\end{proof}
This maximum principle implies the uniqueness of classical solutions for the penalty problem \eqref{eq:penalty-v-eps}.
\begin{cor}\label{cor: uniqueness}
 Under assumptions of Lemma~\ref{lemma:existence-penalty-prob}, $v^{\epsilon}$ is the unique bounded classical solution of \eqref{eq:penalty-v-eps}.
\end{cor}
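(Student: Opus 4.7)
The plan is to apply the maximum principle of Lemma~\ref{lemma:maximum-principle} to the difference of two putative bounded classical solutions. Suppose $v_1$ and $v_2$ are both bounded classical solutions of \eqref{eq:penalty-v-eps}, and set $w := v_1 - v_2$. Then $w$ is bounded on $E_T$, $w(x,0) = g^\epsilon(x) - g^\epsilon(x) = 0$, and subtracting the two equations yields
\[
(\partial_t - \L_D - I + r)\,w + \left[p_\epsilon(v_1 - g^\epsilon) - p_\epsilon(v_2 - g^\epsilon)\right] = 0.
\]

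To convert the penalty difference into a zeroth-order coefficient on $w$, I would use the standard ``integrated mean value'' trick: writing
\[
p_\epsilon(v_1 - g^\epsilon) - p_\epsilon(v_2 - g^\epsilon) = \tilde c(x,t)\,w(x,t), \qquad \tilde c(x,t) := \int_0^1 p'_\epsilon\bigl(s(v_1-g^\epsilon) + (1-s)(v_2-g^\epsilon)\bigr)\,ds.
\]
Since $p'_\epsilon \geq 0$ by property (iv) of \eqref{eq:prop-p-eps}, we have $\tilde c \geq 0$ on $E_T$, hence the new coefficient $c := r + \tilde c$ is $\geq 0$ (so certainly bounded from below), by Assumption~\ref{ass: coeff}. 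Thus $w$ satisfies the equation $(\partial_t - \L_D - I + c)w = 0$ pointwise on $E_T$, with zero initial data.

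Now I verify the hypotheses of Lemma~\ref{lemma:maximum-principle}: $a \geq \lambda > 0$ and both $a, b$ are Hölder continuous on $E_T$, hence bounded; Assumption~\ref{ass: Levy measure} gives $\int_{|y|>1} |y|^2\,\nu(dy) < \infty$, which dominates $\int_{|y|>1}|y|\,\nu(dy) < \infty$; $c$ is bounded from below by $0$; and $w \in C^{2,1}(E_T) \cap C^0(E_T)$ is bounded on $E_T$. Applying the lemma to $w$, since $(\partial_t - \L_D - I + c)w = 0 \geq 0$ and $w(x,0) = 0 \geq 0$, we conclude $w \geq 0$. Applying the same lemma to $-w$ (whose equation and initial data are identical up to sign) gives $-w \geq 0$. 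Hence $w \equiv 0$, i.e., $v_1 = v_2$.

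No part of this argument is genuinely hard; the only delicate point to be careful about is that the coefficient produced by linearizing the penalty term is bounded below, which is automatic here because $p_\epsilon$ is nondecreasing. The integrated-mean-value representation of $\tilde c$ avoids any measurability or continuity issue that would arise from a pointwise intermediate value, and makes the reduction to Lemma~\ref{lemma:maximum-principle} fully rigorous.
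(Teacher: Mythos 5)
Your proof is correct and takes essentially the same approach as the paper: subtract the two solutions, linearize the penalty difference into a nonnegative zeroth-order coefficient, and apply the maximum principle of Lemma~\ref{lemma:maximum-principle} in both directions. The only (welcome) refinement is your use of the integrated mean-value representation $\tilde c = \int_0^1 p_\epsilon'(\cdot)\,ds$ in place of the paper's pointwise mean value theorem, which sidesteps any measurability concern about the intermediate point $y$ depending on $(x,t)$.
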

\begin{proof}
 Lemma~\ref{lemma:existence-penalty-prob} and the definition of H\"{o}lder spaces combined imply that $v_1:=v^\epsilon$ is a bounded classical solution. Now suppose there exists another solution $v_2$, then $v_1-v_2$ satisfies
 \begin{equation*}
 \begin{split}
  &\left(\partial_t - \L_D - I + r\right)\, (v_1-v_2) +
  p_{\epsilon}(v_1-g^{\epsilon}) - p_{\epsilon} (v_2-g^{\epsilon})
  =0, \quad (x,t)\in \R \times (0,T],\\
  & (v_1-v_2) (x,0) =0, \hspace{6.7cm} x\in \R.
 \end{split}
 \end{equation*}
 It follows from the mean value theorem that $p_{\epsilon}\,(v_1-g^{\epsilon}) - p_{\epsilon} (v_2-g^{\epsilon}) = p^{'}_{\epsilon} (y)
 (v_1-v_2)$ for some $y\in \R$, where $p^{'}_{\epsilon}(y)\geq 0$ thanks to \eqref{eq:prop-p-eps}-(iv).
 Now it follows from
 Lemma~\ref{lemma:maximum-principle}, with $c = r+ p_{\epsilon}'(y)$ that $v_1\geq v_2$ on $\R\times
 (0,T]$. The same argument applied to $v_2-v_1$ gives the reverse inequality.
\end{proof}

Utilizing the maximum principle, we will analyze properties of the sequence $(v^\epsilon)_{\epsilon\in (0,1)}$ in the following results.

\begin{lem}\label{lemma:v-eps-bounded}
Let Assumptions~\ref{ass: coeff}, \ref{ass: Levy measure} with $1\leq \alpha<2$, and \ref{ass: payoff} hold. Then for any $\epsilon\in (0,1)$,
\[
 0 \leq v^{\epsilon}\leq K+1 \quad \text{ on } E_T.
\]
\end{lem}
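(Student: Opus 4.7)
The plan is to deduce both bounds from the maximum principle of Lemma~\ref{lemma:maximum-principle}, applied twice with different auxiliary functions. From Lemma~\ref{lemma:existence-penalty-prob} the solution $v^\epsilon$ is continuous, $C^{2,1}$, and bounded on $E_T$ (since the H\"older norm on all of $E_T$ controls the sup), and the mollification preserves $0 \leq g^\epsilon \leq K$, so the regularity hypotheses of Lemma~\ref{lemma:maximum-principle} are met throughout.

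For the lower bound I would apply the maximum principle directly to $v^\epsilon$ with zeroth-order coefficient $c := r$. Property (i) of $p_\epsilon$ gives $p_\epsilon \leq 0$, so \eqref{eq:penalty-v-eps} rearranges to
\[
 (\partial_t - \L_D - I + r)\, v^\epsilon \;=\; -p_\epsilon(v^\epsilon - g^\epsilon) \;\geq\; 0,
\]
while $v^\epsilon(x,0)=g^\epsilon(x)\geq 0$ and $r\geq 0$ by Assumption~\ref{ass: coeff}. Lemma~\ref{lemma:maximum-principle} then yields $v^\epsilon \geq 0$.

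For the upper bound I would work with the auxiliary function $w := K+1 - v^\epsilon$. Since constants are annihilated by $\partial_t$, $\L_D$, and $I$, equation \eqref{eq:penalty-v-eps} becomes
\[
 (\partial_t - \L_D - I)\, w \;=\; r\, v^\epsilon + p_\epsilon(v^\epsilon - g^\epsilon).
\]
The key idea is to turn $p_\epsilon(v^\epsilon - g^\epsilon)$ into a nonnegative zeroth-order contribution to $w$ via the mean value theorem. Since $g^\epsilon \leq K$ and $\epsilon < 1$, we have $K+1-g^\epsilon \geq 1 > \epsilon$, so property (ii) forces $p_\epsilon(K+1-g^\epsilon)=0$. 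Then a measurable selection $\xi(x,t)$ between $v^\epsilon - g^\epsilon$ and $K+1-g^\epsilon$ satisfies
\[
 p_\epsilon(v^\epsilon - g^\epsilon) \;=\; p_\epsilon(v^\epsilon - g^\epsilon) - p_\epsilon(K+1 - g^\epsilon) \;=\; p'_\epsilon(\xi)\, (v^\epsilon - K - 1) \;=\; -p'_\epsilon(\xi)\, w,
\]
with $p'_\epsilon(\xi)\geq 0$ by property (iv). Substituting back,
\[
 (\partial_t - \L_D - I + p'_\epsilon(\xi))\, w \;=\; r\, v^\epsilon \;\geq\; 0,
\]
where nonnegativity of the right-hand side uses the lower bound already established together with $r\geq 0$. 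Since $w(x,0)=K+1-g^\epsilon(x)\geq 1>0$, $w$ is bounded, and $c:=p'_\epsilon(\xi)\geq 0$ is bounded below, a second application of Lemma~\ref{lemma:maximum-principle} gives $w\geq 0$, i.e.\ $v^\epsilon \leq K+1$.

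The only delicate point I anticipate is justifying the specific constant $K+1$: the buffer $1>\epsilon$ is precisely what annihilates $p_\epsilon(K+1-g^\epsilon)$ and lets the mean value theorem produce a clean zeroth-order remainder proportional to $w$. The order of the two bounds also matters, since the nonnegativity of the right-hand side in the upper-bound step relies on the lower bound being in hand.
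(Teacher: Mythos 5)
Your proof is correct and follows essentially the same route as the paper: the lower bound via property (i) of $p_\epsilon$ and the maximum principle, and the upper bound via $w=K+1-v^\epsilon$, property (ii) to kill $p_\epsilon(K+1-g^\epsilon)$, and the mean value theorem with $p'_\epsilon\geq 0$ to absorb the penalty into the zeroth-order coefficient. The only (harmless) difference is that you put $r\,v^\epsilon$ on the right-hand side and take $c=p'_\epsilon(\xi)$, which forces you to prove the lower bound first, whereas the paper keeps $r$ in the operator and gets the right-hand side $r(K+1)\geq 0$ without using the lower bound.
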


\begin{proof}
 It follows from Lemma~\ref{lemma:existence-penalty-prob} that
 $v^{\epsilon}$ is bounded on $E_T$ for each $\epsilon\in (0,1)$. In this proof, we will show that the bounds are uniform in $\epsilon$. First, it follows from \eqref{eq:prop-p-eps} part (i) that $(\partial_t - \L_{D} - I + r)\, v^{\epsilon} = - p_{\epsilon} (v^{\epsilon} - g^{\epsilon})\geq
 0$. Moreover, $v^{\epsilon} (x,0) = g^{\epsilon}(x)\geq 0$ for $x\in \R$. Then first inequality in the statement follows from Lemma~\ref{lemma:maximum-principle} directly. Second, consider $w= K+1- v^{\epsilon}$, it satisfies
 \begin{equation*}
  \left(\partial_t - \L_{D} - I + r\right) w = r (K+1) +
  p_{\epsilon} (v^{\epsilon} - g^{\epsilon}), \quad (x,t)\in \R\times (0,T].
 \end{equation*}
 Combining \eqref{eq:prop-p-eps} part (ii) and $g^\epsilon \leq K$, we have $p_{\epsilon}(K+1 - g^{\epsilon})=0$. Hence,
 \begin{equation}
  \left(\partial_t - \L_{D} - I + r\right) w + p_{\epsilon}
  (K+1-g^{\epsilon}) - p_{\epsilon}(v^{\epsilon}- g^{\epsilon}) =
  \left[\partial_t - \L_{D} - I + r + p_{\epsilon}^{'}(y)\right] w
  = r\,(K+1) \geq 0,
 \end{equation}
 where the first equality follows from the mean value theorem. Now applying Lemma~\ref{lemma:maximum-principle} to above equation with $c=r+p'(y)\geq 0$ (see \eqref{eq:prop-p-eps} part (iv)), we obtain $w(x,t) = K+1-v^\epsilon(x,t) \geq 0$ on $E_T$ for any $\epsilon\in (0,1)$, which confirms the second inequality in the statement of the lemma.
\end{proof}

\begin{lem}\label{lemma:v-x-eps-bounded}
Let Assumptions~\ref{ass: coeff}, \ref{ass: Levy measure} with $1\leq \alpha<2$, and \ref{ass: payoff} hold. Then for any $\epsilon\in (0,1)$,
\[
 \left|\partial_{x} v^{\epsilon} \right| \leq C \quad
 \text{ on } E_T,
\]
in which $C$ depends on $T$ and $L$.
\end{lem}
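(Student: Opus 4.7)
The plan is to run a maximum-principle argument on a discrete $x$-derivative of $v^\epsilon$, exploiting the monotonicity of the penalty $p_\epsilon$. Fix $\delta>0$ and set $w_\delta(x,t):=v^\epsilon(x+\delta,t)-v^\epsilon(x,t)$. Since the mollifier can be chosen so that $g^\epsilon$ inherits the Lipschitz constant $L$ of $g$, we have $|w_\delta(x,0)|\le L\delta$ on $\R$. I would aim to prove $|w_\delta(x,t)|\le L\delta\,e^{K t}$ uniformly in $\epsilon$ for a constant $K=K(T)$ depending only on the bounds of the coefficients and on $L$, and then divide by $\delta$ and let $\delta\downarrow 0$ to obtain $|\partial_x v^\epsilon|\le Le^{KT}$.

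To set up the maximum principle, I subtract the penalty PDE \eqref{eq:penalty-v-eps} evaluated at $(x+\delta,t)$ from that at $(x,t)$. Using the mean value theorem on $p_\epsilon$, this yields an equation of the schematic form
\begin{equation*}
(\partial_t-\L_D-I+r+p'_\epsilon(\xi))\,w_\delta \;=\; p'_\epsilon(\xi)\,[g^\epsilon(x+\delta)-g^\epsilon(x)] + R_\delta,
\end{equation*}
where the coefficients $a,b,r$ and the kernel $h$ in $\L_D$ and $I$ are frozen at $(x,t)$, and $R_\delta$ collects the errors produced by the Lipschitz variation in $x$ of $a,b,r,h$, namely $[a(x+\delta,t)-a(x,t)]\partial^2_{xx}v^\epsilon(x+\delta,t)$, $[b(x+\delta,t)-b(x,t)]\partial_x v^\epsilon(x+\delta,t)$, $[r(x,t)-r(x+\delta,t)]v^\epsilon(x+\delta,t)$, and the corresponding ``kernel-shift'' contribution in $I$. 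Next consider $\psi(x,t):=w_\delta(x,t)-L\delta\,e^{Kt}$ (together with an auxiliary function that vanishes at infinity, to localize the max exactly as in the proof of Lemma~\ref{lemma:maximum-principle}). Then $\psi(\cdot,0)\le0$, and I would argue by contradiction that $\psi$ cannot attain a strictly positive maximum at any $(x_0,t_0)\in E_T$.

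At such a hypothetical maximum point one has $\partial_t\psi\ge0$, $\partial_x\psi=0$, $\partial^2_{xx}\psi\le0$ and $I^{x_0}\psi(x_0,t_0)\le0$, together with $w_\delta(x_0,t_0)>L\delta e^{Kt_0}\ge L\delta\ge g^\epsilon(x_0+\delta)-g^\epsilon(x_0)$. The key algebraic observation is that, because $p'_\epsilon\ge 0$ by \eqref{eq:prop-p-eps}(iv), the right-hand side $p'_\epsilon(\xi)[g^\epsilon(x_0+\delta)-g^\epsilon(x_0)]-p'_\epsilon(\xi)\,L\delta e^{Kt_0}$ is nonpositive; in other words the monotonicity of the penalty absorbs the only potentially bad term whose size would depend on $\epsilon$. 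Combined with $\partial_t(L\delta e^{Kt})=KL\delta e^{Kt}$, the PDE for $\psi$ at $(x_0,t_0)$ forces the inequality
\begin{equation*}
 K L\delta\,e^{Kt_0}\;\le\; |R_\delta(x_0,t_0)|,
\end{equation*}
so choosing $K$ larger than the corresponding implicit constant produces the contradiction, and an entirely symmetric argument applied to $-w_\delta$ closes the gap.

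The main obstacle is the remainder $R_\delta$: the term $[a(x_0+\delta,t_0)-a(x_0,t_0)]\partial^2_{xx}v^\epsilon(x_0+\delta,t_0)$ carries $\partial^2_{xx}v^\epsilon$, which we do \emph{not} control uniformly in $\epsilon$, and the kernel-shift piece in $I$ involves differences $h(x_0+\delta,y,t_0)-h(x_0,y,t_0)$ that must be integrated against the possibly infinite-variation measure $\nu$. I expect to handle the $a$-term by using the sign information $\partial^2_{xx}\psi(x_0,t_0)\le0$ (hence $\partial^2_{xx}v^\epsilon(x_0+\delta,t_0)\le \partial^2_{xx}v^\epsilon(x_0,t_0)$) together with the PDE to re-express $a\,\partial^2_{xx}v^\epsilon$ in terms of lower-order quantities that are uniformly bounded through Lemma~\ref{lemma:v-eps-bounded}; and to handle the kernel-shift piece by combining the bound $|h(x_1,y,t)-h(x_2,y,t)|\le|y|\,|x_1-x_2|$ from \eqref{ass: h} with a Taylor expansion of $v^\epsilon$, thereby producing an integrand of order $y^2$ for which $\int_{|y|\le 1}y^2\,\nu(dy)$ is finite by Assumption~\ref{ass: Levy measure}. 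Once $|R_\delta|\le C\delta$ is established with $C$ depending only on $T$, $L$ and the coefficients, the contradiction follows and the proof is complete.
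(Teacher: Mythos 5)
Your overall strategy---a maximum-principle comparison of a derivative-like quantity against a barrier $L e^{Kt}$ and using the monotonicity $p_\epsilon'\geq 0$ to absorb the penalty term---is exactly the spirit of the paper's argument, and the penalty bookkeeping you describe is correct. The decisive difference is that you work with the finite difference $w_\delta(x,t)=v^\epsilon(x+\delta,t)-v^\epsilon(x,t)$, whereas the paper works with $w=\partial_x v^\epsilon$ itself (after proving, via the linear theory of Garroni--Menaldi together with integration and the uniqueness result of Corollary~\ref{cor: uniqueness}, that $\partial_x v^\epsilon$ is a classical solution of the formally differentiated penalty equation). This is not a cosmetic distinction: it is precisely how the paper avoids the term you yourself flag as ``the main obstacle.''

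The gap is in your treatment of the residual $R_\delta$, specifically the piece $\bigl[a(x_0+\delta,t_0)-a(x_0,t_0)\bigr]\,\partial^2_{xx}v^\epsilon(\cdot,t_0)$. To close the contradiction you need $|R_\delta|\leq C\delta$ with $C$ independent of $\epsilon$, hence a bound on $\partial^2_{xx}v^\epsilon$ uniform in $\epsilon$; but such a bound is unavailable at this stage---it is essentially the content of the theorem being proved. Neither of your two proposed fixes supplies it. The ``sign information'' at the maximum of $\psi$ only gives $\partial^2_{xx}v^\epsilon(x_0+\delta,t_0)\leq\partial^2_{xx}v^\epsilon(x_0,t_0)$; since the sign of $a(x_0+\delta,t_0)-a(x_0,t_0)$ is uncontrolled, this ordering does not make the residual one-signed, let alone small. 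The ``use the PDE'' fix replaces $a\,\partial^2_{xx}v^\epsilon$ by $\partial_t v^\epsilon - b\,\partial_x v^\epsilon - Iv^\epsilon + rv^\epsilon + p_\epsilon(v^\epsilon-g^\epsilon)$, but $\partial_t v^\epsilon$ and $Iv^\epsilon$ are not bounded uniformly in $\epsilon$ either, and invoking a bound on $\partial_x v^\epsilon$ is circular. By contrast, once one actually differentiates the equation, the term $\partial_x a\cdot\partial^2_{xx}v^\epsilon$ becomes $\partial_x a\cdot\partial_x w$, a first-order term in $w=\partial_x v^\epsilon$ with a bounded coefficient, which is harmlessly absorbed into the drift of the equation for $w$; the paper's detour through Theorem~3.1 of \cite{garr-mena}, followed by integration in $x$ and the uniqueness of Corollary~\ref{cor: uniqueness}, is exactly what rigorously legitimizes this differentiation. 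Your proof as written does not reach a complete argument unless you either (a) justify the differentiation of the penalty PDE as the paper does and then apply the maximum principle to $e^{\gamma t}L\pm\partial_x v^\epsilon$, or (b) find a genuinely different device to kill the $\partial^2_{xx}v^\epsilon$-residual in the finite-difference scheme.
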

\begin{proof}
 Formally differentiating \eqref{eq:penalty-v-eps} with respect to $x$ gives the following equation:
 \begin{equation}\label{eq: pde v_x}
 \begin{split}
  & \left[\partial_t - a \partial^2_{xx} -(b+ \partial_x a) \partial_x - \hat{I} + \left(r-\partial_x b-\int_{|y|>1}\partial_x h\, \nu(dy)\right)\right]  w +v^\epsilon \partial_x r + p^{'}_{\epsilon}\left(v^\epsilon - g^{\epsilon}\right) \left(w- (g^{\epsilon})^{'}\right) =0,\\
  & \hspace{12cm} (x,t)\in \R\times (0,T],\\
  & w(x,0) = (g^{\epsilon})^{'}(x), \hspace{9.2cm} x\in \R.
 \end{split}
 \end{equation}
 Here $\hat{I} \phi := I \phi + \int_{\R} \left[\phi(x+h, t) - \phi(x,t)\right] \partial_x h \,\nu(dy)$, where the second integral is well defined for Lipschitz bounded function $\phi$ because $|\partial_x h|\leq |y|$ from \eqref{ass: h} and $\int_{|y|>1} |y|\nu(dy)<\infty$ from Assumption \ref{ass: Levy measure}.
 We will show that $\partial_x v^\epsilon$ is indeed a classical solution of \eqref{eq: pde v_x}. To this end, let us consider the equation
 \begin{equation*}
 \begin{split}
  & \left[\partial_t - a \partial^2_{xx} -(b+ \partial_x a) \partial_x - \hat{I} + \left(r-\partial_x b-\int_{|y|>1}\partial_x h\, \nu(dy)\right)\right]  w = -\partial_x r \,v^\epsilon - p^{'}_{\epsilon}\left(v^\epsilon - g^{\epsilon}\right) \left(\partial_x v^\epsilon- (g^{\epsilon})^{'}\right), \\
  & \hspace{12cm} (x,t)\in \R\times (0,T],\\
  & w(x,0) = (g^{\epsilon})^{'}(x), \hspace{9.2cm} x\in \R.
 \end{split}
 \end{equation*}
 Using Assumption~\ref{ass: coeff} and Lemma~\ref{lemma:existence-penalty-prob}, one can check that the driving term $-\partial_x r \,v^\epsilon - p^{'}_{\epsilon}\left(v^\epsilon - g^{\epsilon}\right) \left(\partial_x v^\epsilon- (g^{\epsilon})^{'}\right)$ and all coefficients of the previous equation are H\"{o}lder continuous. It then follows from Theorem~3.1 in \cite{garr-mena} on pp. 89 that the last equation has a classical solution, say $w$. Define $v(x,t):= \int_0^x w(z, t) dz + v^\epsilon(0,t)$. It is straight forward to check that $v$ is a classical solution of the following equation
 \[
 \begin{split}
  &(\partial_t -\L_D - I+r) v = -p_\epsilon(v^\epsilon - g^\epsilon), \quad (x,t)\in \R\times (0,T],\\
  & v(x,0) = g^\epsilon(x), \hspace{3.5cm} x\in \R.
 \end{split}
 \]
Since $g^\epsilon$ and $v^\epsilon$ are both bounded, then $-p_\epsilon(v^\epsilon-g^\epsilon)$ is also bounded. As a result, estimate (3.6) in Theorem~3.1 of \cite{garr-mena} on pp. 89 implies that $v$ is bounded solution of the last equation. However, Corollary~\ref{cor: uniqueness} already shows that $v^\epsilon$ is the unique bounded solution of the last solution, therefore $v=v^\epsilon$, hence $\partial_x v^\epsilon = w$ on $E_T$ and $\partial_x v^\epsilon$ is a classical solution of \eqref{eq: pde v_x}.

 Now we shall show $\partial_x v^\epsilon$ is bounded uniformly in $\epsilon$. Consider $\tilde{v} =e^{\gamma t}L + \partial_x v^\epsilon$, where $L$ is given by Assumption~\ref{ass: payoff} and $\gamma>0$ will be determined later. The function $\tilde{v}$ satisfies the following equation
 \begin{equation}\label{eq: est v_x}
 \begin{split}
  &\left[\partial_t - a\partial^2_{xx} - (b+\partial_x a) \partial_x - \hat{I} + r-\partial_x b -\int_{|y|>1}\partial_x h\, \nu(dy) + p_\epsilon^{'}(v^\epsilon-g^\epsilon)\right] \tilde{v}\\
  & \hspace{1cm} = \left(\gamma + r-\partial_x b -\int_{|y|>1}\partial_x h\, \nu(dy)\right) e^{\gamma t} L - \partial_x r\, v^\epsilon + p_\epsilon^{'}(v^\epsilon-g^\epsilon)\left(e^{\gamma t} L + (g^\epsilon)^{'}\right), \quad (x,t)\in \R\times (0,T],\\
  & \tilde{v}(x,0) = e^{\gamma t} L + (g^{\epsilon})^{'}(x), \hspace{9.8cm} x\in \R.
 \end{split}
 \end{equation}
 Recall that $\partial_x b$ and $\partial_x r$ are bounded from Assumption~\ref{ass: coeff}. Observe that $\int_{|y|>1} \partial_x h \,\nu(dy)<\infty$ because $|\partial_x h|\leq |y|$ and $\int_{|y|>1} |y| \,\nu(dy)<\infty$. Moreover, $v^\epsilon$ is bounded uniformly in $\epsilon$ thanks to Lemma~\ref{lemma:v-eps-bounded}. Therefore, one can find a sufficiently large $\gamma$, independent of $\epsilon$, such that $\left(\gamma + r-\partial_x b-\int_{|y|>1}\partial_x h\, \nu(dy)\right) e^{\gamma t} L - \partial_x r\, v^\epsilon>0$. On the other hand, $p_\epsilon^{'}(v^\epsilon-g^\epsilon)\left(e^{\gamma t} L + (g^\epsilon)^{'}\right)$ is also positive due to \eqref{eq:prop-p-eps}-(iv) and $|(g^{\epsilon})^{'}|\leq L$. As a result, the right-hand-side of \eqref{eq: est v_x} is positive. Now since $r- \partial_x b -\int_{|y|>1}\partial_x h\, \nu(dy) +p_\epsilon^{'}(v^\epsilon-g^\epsilon)$ is bounded from below, moreover the L\'{e}vy measure $(1+ \partial_x h) \nu(dy)$ associated to $\hat{I}$ satisfies $\int_{|y|>1} |y|(1+ |\partial_x h|) \nu(dy) \leq \int_{|y|>1} (|y|+|y|^2) \nu(dy)<\infty$ (see \eqref{ass: h} and Assumption \ref{ass: Levy measure}), we then have from Lemma~\ref{lemma:maximum-principle} with $I=\hat{I}$ that $\tilde{v}\geq 0$ on $E_T$. Hence $\partial_x v^\epsilon \geq - e^{\gamma T} L$ on $E_T$, for some positive $\gamma$ independent of $\epsilon$. The upper bound can be shown similarly by working with $\tilde{v} = e^{\gamma t}L - \partial_x v^\epsilon$.
\end{proof}

\begin{lem}\label{lemma:u-geq-g}
 Let Assumptions~\ref{ass: coeff}, \ref{ass: Levy measure} with $1\leq \alpha<2$, and \ref{ass: payoff} hold. Then for any $\epsilon \in (0,1)$,
 \[
  v^{\epsilon} \geq g^\epsilon \quad \text{ on } E_T.
 \]
\end{lem}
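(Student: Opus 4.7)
The plan is to apply the maximum principle of Lemma~\ref{lemma:maximum-principle} to $w := v^\epsilon - g^\epsilon$ with zero initial data. Since $v^\epsilon\in H^{2+\beta-\alpha-\gamma,1+(\beta-\alpha-\gamma)/2}(E_T)$ from Lemma~\ref{lemma:existence-penalty-prob} and $g^\epsilon$ is smooth, $w\in C^{2,1}(E_T)$; $w$ is bounded thanks to Lemma~\ref{lemma:v-eps-bounded} and $0\leq g^\epsilon\leq K$; and $w(x,0)=v^\epsilon(x,0)-g^\epsilon(x)=0$. Using $\partial_t g^\epsilon=0$ and subtracting $(\partial_t-\L_D-I+r)g^\epsilon$ from \eqref{eq:penalty-v-eps} gives
\begin{equation*}
 (\partial_t-\L_D-I+r)\,w \;=\; \bigl(\L_D g^\epsilon + I g^\epsilon - r g^\epsilon\bigr) \;-\; p_\epsilon(w).
\end{equation*}

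The heart of the argument is the pointwise lower bound $\L_D g^\epsilon + I g^\epsilon - r g^\epsilon \geq p_\epsilon(0)$, which is precisely why $p_\epsilon(0)$ is prescribed as in \eqref{eq:prop-p-eps}(iii). Since the mollification of $g$ preserves $\partial_{xx}^2 g^\epsilon\geq -J$, $|(g^\epsilon)'|\leq L$, and $0\leq g^\epsilon\leq K$, we have $\L_D g^\epsilon = a\,\partial_{xx}^2 g^\epsilon + b\,(g^\epsilon)' \geq -a^{(0)} J - |b|^{(0)} L$ and $-r g^\epsilon\geq -r^{(0)} K$. For the nonlocal term, Taylor's formula gives
\begin{equation*}
 g^\epsilon(x+h) - g^\epsilon(x) - h\,(g^\epsilon)'(x) \;=\; h^2 \int_0^1 (1-s)\,(g^\epsilon)''(x+sh)\,ds \;\geq\; -\tfrac{J}{2} h^2 \;\geq\; -J y^2,
\end{equation*}
using $|h|\leq |y|$, so on $|y|\leq 1$ the contribution is bounded below by $-J\int_{|y|\leq 1} y^2\,\nu(dy)$; on $|y|>1$, $g^\epsilon(x+h)-g^\epsilon(x)\geq -K$ integrates to $-K\int_{|y|>1}\nu(dy)$. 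Adding these contributions reproduces exactly $p_\epsilon(0)$.

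Given this inequality, write $p_\epsilon(w)-p_\epsilon(0) = C(x,t)\,w$ with $C(x,t):=\int_0^1 p_\epsilon'(s\,w(x,t))\,ds \geq 0$ by \eqref{eq:prop-p-eps}(iv). Then
\begin{equation*}
 (\partial_t-\L_D-I+r+C)\,w \;\geq\; p_\epsilon(0) - p_\epsilon(w) + p_\epsilon(w) - p_\epsilon(0) \;=\; 0
\end{equation*}
(after rearrangement, using the lower bound on $\L_D g^\epsilon + I g^\epsilon - r g^\epsilon$). The zeroth-order coefficient $c:=r+C$ is bounded below by $0$, and all hypotheses of Lemma~\ref{lemma:maximum-principle} are met, so $w\geq 0$ on $E_T$, which is the claim.

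The main (and only nontrivial) step is the integral estimate on $I g^\epsilon$: the one-sided bound $(g^\epsilon)''\geq -J$ is exactly the minimal regularity that lets Taylor's formula produce a $|y|^2$-bound near the singularity of $\nu$, and the boundedness of $g^\epsilon$ handles the large jumps via $\int_{|y|>1}\nu(dy)<\infty$ from Assumption~\ref{ass: Levy measure}. Once this is in place, the $p_\epsilon(0)$ constant in \eqref{eq:prop-p-eps}(iii) is engineered to close the comparison, and the remainder reduces to a linearization by the mean value theorem and an appeal to the maximum principle already established.
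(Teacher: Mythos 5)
Your proof is correct and follows essentially the same route as the paper's: bound $Ig^\epsilon$ from below via the Taylor remainder formula on $|y|\leq 1$ and boundedness of $g^\epsilon$ on $|y|>1$, note this is exactly what $p_\epsilon(0)$ was engineered to absorb, linearize $p_\epsilon$ by the mean value theorem, and invoke Lemma~\ref{lemma:maximum-principle} with $c = r + p_\epsilon' \geq 0$. The only cosmetic difference is that you use the integral form of the mean value theorem (producing a genuine nonnegative coefficient $C(x,t)$) where the paper uses the Lagrange form $p_\epsilon'(y)$; both are valid since $p_\epsilon'\geq 0$ pointwise.
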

\begin{proof}
 Let us first show that $I g^{\epsilon}$ is uniformly bounded from below. Indeed,
 \begin{equation*}\label{eq:Ig-eps-lb}
 \begin{split}
  I g^{\epsilon} (x) &= \int_{|y|\leq 1} \nu(dy) \int_0^1 dz (1-z) \,h^2 \partial^2_{xx}
  g^{\epsilon}(x+z h) + \int_{|y|>1} \left[g^{\epsilon}(x+h) -
  g^{\epsilon}(x)\right] \nu(dy) \\
  & \geq -J \int_{|y|\leq 1} |y|^2 \nu(dy) - K \int_{|y|>1}
  \nu(dy),
 \end{split}
 \end{equation*}
 where the inequality follows from $\partial^2_{xx} g^\epsilon \geq -J$ and $0\leq g^\epsilon\leq K$.
 As a result, $\left(\partial_t - \L_{D} - I + r\right) g^{\epsilon}$ is bounded
 from above. This is because
 \begin{equation*}\label{eq:Lg-eps-ub}
 \begin{split}
  \left(\partial_t - \L_{D} - I + r\right) g^{\epsilon}(x) &= - a(x,t) \, \partial^2_{xx} g^{\epsilon}(x) - b(x,t) \, \partial_x g^{\epsilon}(x) + r(x,t)\, g^{\epsilon}(x) - I g^{\epsilon}(x)\\
  & \leq a^{(0)} J + |b|^{(0)} L + r^{(0)} K + J \int_{|y|\leq
  1} |y|^2 \nu(dy) + K \int_{|y|>1} \nu(dy)\\
  & = -p_{\epsilon}(0),
 \end{split}
 \end{equation*}
 where the second equality follows from \eqref{eq:prop-p-eps} part
 (iii). Therefore,
 \begin{equation*}
 \begin{split}
  \left(\partial_t - \L_{D} - I + r\right) \left(v^{\epsilon} -
  g^{\epsilon}\right) &= -p_{\epsilon} \left(v^{\epsilon} -
  g^{\epsilon}\right) - \left(\partial_t - \L_{D} - I + r\right)
  g^{\epsilon} \\
  & \geq -p_{\epsilon} \left(v^{\epsilon} - g^{\epsilon}\right) +
  p_{\epsilon}(0).
 \end{split}
 \end{equation*}
 The previous inequality and the mean value theorem combined imply that
 \begin{equation*}\label{eq:Lu-g-eps-lb-2}
  \left(\partial_t - \L_D - I + r + p_{\epsilon}^{'} (y)\right)
  \left(v^{\epsilon} - g^{\epsilon}\right) \geq 0,
 \end{equation*}
 for some $y\in \R$. Hence the statement of the lemma follows applying
 Lemma~\ref{lemma:maximum-principle} to the previous inequality
 and choosing $c=r+p_{\epsilon}^{'}(y)\geq 0$.
\end{proof}

\begin{cor}\label{cor:penalty-bounded}
 Let assumptions of Lemma~\ref{lemma:u-geq-g} hold. Then $p_{\epsilon}\left(v^{\epsilon} - g^{\epsilon}\right)$ is bounded
 uniformly in $\epsilon\in (0, 1)$.
\end{cor}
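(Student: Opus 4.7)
The proof is a short consequence of Lemma~\ref{lemma:u-geq-g} together with the structural properties of the penalty function $p_\epsilon$ imposed in \eqref{eq:prop-p-eps}. The plan is simply to sandwich $p_\epsilon(v^\epsilon-g^\epsilon)$ between $p_\epsilon(0)$ and $0$, noting that both bounds are independent of $\epsilon$.

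First, by property \eqref{eq:prop-p-eps}-(i), $p_\epsilon(y) \leq 0$ for every $y \in \R$, so in particular $p_\epsilon(v^\epsilon - g^\epsilon) \leq 0$ pointwise on $E_T$. Second, Lemma~\ref{lemma:u-geq-g} tells us that $v^\epsilon - g^\epsilon \geq 0$ on $E_T$, while property \eqref{eq:prop-p-eps}-(iv), namely $p_\epsilon' \geq 0$, states that $p_\epsilon$ is non-decreasing. Combining these two facts yields
\[
p_\epsilon(v^\epsilon - g^\epsilon) \geq p_\epsilon(0) \quad \text{on } E_T.
\]
Finally, property \eqref{eq:prop-p-eps}-(iii) prescribes the explicit value
\[
p_\epsilon(0) = - a^{(0)} J - |b|^{(0)} L - r^{(0)} K - J \int_{|y|\leq 1}|y|^2 \nu(dy) - K \int_{|y|>1}\nu(dy),
\]
which by Assumptions~\ref{ass: coeff}, \ref{ass: Levy measure} and \ref{ass: payoff} is a finite constant depending only on the data of the problem and, crucially, independent of $\epsilon$. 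Hence
\[
\bigl|p_\epsilon(v^\epsilon - g^\epsilon)(x,t)\bigr| \leq |p_\epsilon(0)| \quad \text{for all } (x,t) \in E_T, \; \epsilon \in (0,1),
\]
which is the desired uniform bound.

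There is essentially no obstacle to surmount at this stage: all the real work has already been done. The subtle ingredient is the lower bound $v^\epsilon \geq g^\epsilon$ of Lemma~\ref{lemma:u-geq-g}, whose proof exploits precisely the $\epsilon$-independent calibration of $p_\epsilon(0)$ in \eqref{eq:prop-p-eps}-(iii) so that $(\partial_t - \L_D - I + r)g^\epsilon \leq -p_\epsilon(0)$ uniformly. With that in hand, the monotonicity and sign properties of $p_\epsilon$ immediately deliver the uniform penalty bound.
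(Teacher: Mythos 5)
Your proof is correct and follows exactly the same argument as the paper: sandwich $p_\epsilon(v^\epsilon - g^\epsilon)$ between $p_\epsilon(0)$ and $0$ using Lemma~\ref{lemma:u-geq-g} together with properties (i), (iv), and the $\epsilon$-independence from (iii) of \eqref{eq:prop-p-eps}.
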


\begin{proof}
 Lemma~\ref{lemma:u-geq-g} and \eqref{eq:prop-p-eps}-(i)\&(iv) together imply that
 $p_{\epsilon}(0) \leq p_{\epsilon} \left(v^{\epsilon} - g^{\epsilon}\right) \leq 0.$
 Then the statement follows since $p_{\epsilon}(0)$ is independent of $\epsilon$; see \eqref{eq:prop-p-eps} part (iii).
\end{proof}

\subsection{Proof of Theorem~\ref{thm: main} and Corollary~\ref{cor: smooth-fit}}

\begin{proof}[Proof of Theorem~\ref{thm: main}]
 The proof consists of two steps. First, we show that there exists a function $v^*$ which solves \eqref{eq: VI} and $v^*\in W^{2,1}_p(B \times (s,T))$ for any integer $p\in (1,\infty)$, compact domain $B\subset \mathbb{R}$, and $s\in[0,T)$. Second, we confirm that $u^*(x,t) := v^*(x, T-t)$ is the value function for the problem \eqref{eq:optimal-stopping}.

 \textit{\underline{Step 1:}} First, it follows from Lemma~\ref{lemma:existence-penalty-prob} that $\partial_t v^\epsilon$, $\partial_x v^\epsilon$, and $\partial^2_{xx} v^\epsilon$ are continuous, hence locally bounded on $\R\times(0,T)$. Therefore $v^\epsilon\in W^{2,1}_{p, loc}(\R \times (0,T))$ for each $\epsilon \in (0,1)$.  Second, Lemmas~\ref{lemma:v-eps-bounded} and \ref{lemma:v-x-eps-bounded} show that $v^\epsilon$ and $\partial_x v^\epsilon$ are bounded on $E_T$, uniformly in $\epsilon$. Moreover, the penalty term $p_\epsilon(v^\epsilon -g^\epsilon)$ is also bounded uniformly in $\epsilon$ due to Corollary~\ref{cor:penalty-bounded}. Therefore these boundedness properties and Proposition~\ref{prop:W^21_p-est} with $f=-p_\epsilon(v^\epsilon - g^\epsilon)$ together imply that
 \begin{equation}\label{eq: uniform sobolev norm}
  \left\|v^\epsilon\right\|_{W^{2,1}_p(B\times (s,T))} \leq C, \quad \text{ for some constant } C \text{ independent of } \epsilon.
 \end{equation}
Thanks to the weak compactness of the Sobolev space $W^{2,1}_p$, $1<p<\infty$, we can then find a subsequence $(\epsilon_k)_{k\geq 0}$ converging to zero and a function $v^*$, such that $v^{\epsilon_{k}} \rightharpoonup v^*\in W^{2,1}_p (B\times (s,T))$. Here $``\rightharpoonup"$ represents the weak convergence; c.f. Appendix D.4. in \cite{evans} pp. 639. In fact this convergence can be shown to be pointwise and uniform in the index. Indeed, \eqref{eq: uniform sobolev norm} and the Sobolev embedding theorem (c.f. Lemma~3.3 in \cite{lad} pp. 80) combined imply that
 \[
  \left\|v^\epsilon\right\|^{(\beta)}_{B \times [s,T]} \leq C, \quad \text{ where } \beta = 2-\frac{3}{p} \text{ and } C \text{ is some constant independent of } \epsilon.
 \]
 Choosing $p>1$ so that $\beta>0$ and using the previous uniform estimate along with the Arzel\`{a}-Ascoli theorem, we then find a further subsequence of $(\epsilon_k)_{k\geq 0}$, which is still denoted by $(\epsilon_k)_{k\geq 0}$, such that $(v^{\epsilon_k})_{k\geq 0}$ converge to $v^*$ uniformly on $B\times [s,T]$. Since each $v^{\epsilon_k}$ is continuous, $v^*$ is also continuous on $B\times [s,T]$.

 Let us show that $v^*$ solves \eqref{eq: VI}. On the one hand, since $p_\epsilon(v^{\epsilon_k} - g^{\epsilon_k}) \leq 0$, we have $(\partial_t -\L_D - I+r) \,v^{\epsilon_k} \geq 0$ for each $\epsilon_k$. Hence
 \[
  \int \left(\partial_t - \mathcal{L}_D - I + r\right) v^* \phi \, dx dt =\lim_{\epsilon_k\rightarrow 0} \int \left(\partial_t - \mathcal{L}_D - I + r\right) v^{\epsilon_k} \phi \, dx dt \geq 0,
 \]
 for any compactly supported smooth function $\phi$. Here the identity above follows from applying the dual operator of $\partial_t - \mathcal{L}_D- I +r$ to $\phi$ and utilizing the dominated convergence theorem. The previous inequality then yields $(\partial_t -\L_D - I+r) \,v^* \geq 0$ on $B \times [s,T]$ in the distributional sense, which implies the same inequality on $\R\times (0,T]$ in the distributional sense, since the choices of $B$ and $s$ are arbitrary. On the other hand, Lemma~\ref{lemma:u-geq-g} shows that $v^{\epsilon_k} \geq g^{\epsilon_k}$. Then $v^*\geq g$ after sending $\epsilon_k \rightarrow 0$. Therefore, we obtain $\min\{(\partial_t -\L_D - I+r) \,v^*, v^*- g\}\geq 0$ on $\R\times (0,T]$ in the distributional sense. It then remains to show $(\partial_t -\L_D - I+r) \,v^* =0$ when $v^*>g$. To this end, take any $(x,t)$ such that $v^*(x,t)> g(x)$. Since both $v^*$ and $g$ are continuous, one can find a sufficiently small $\delta>0$ and a small neighborhood of $(x,t)$, such that $v^*(\tilde{x}, \tilde{t}) \geq g(\tilde{x}) + 2\delta$ for any $(\tilde{x}, \tilde{t})$ inside this neighborhood. Utilizing the uniform convergence of $(v^{\epsilon_k})_{k\geq 0}$ and $(g^{\epsilon_k})_{k\geq 0}$, we can then find sufficiently small $\epsilon_k$ such that $v^{\epsilon_k}(\tilde{x}, \tilde{t}) \geq g^{\epsilon_k}(\tilde{x}) + \delta$ in the aforementioned neighborhood. Hence $p_{\epsilon_k}(v^{\epsilon_k}-g^{\epsilon_k})(x,t) =0$, due to \eqref{eq:prop-p-eps}-(ii), which induces $(\partial_t -\L_D - I+r) \,v^{\epsilon_k}(x,t) =0$. After sending $\epsilon_k\rightarrow 0$, we conclude that $(\partial_t -\L_D - I+r) \,v^* =0$ in the distributional sense when $v^*>g$. Finally, since $v^*\in W^{2,1}_{p, loc}$, $v^*$ also solves \eqref{eq: VI} at almost every point in $E_T$.

 \textit{\underline{Step 2:}} Let us first show that $v^*$ is a viscosity solution of \eqref{eq: VI}. We will use the definition of viscosity solutions in \cite{pham-control}. Denote by $C_1(E_T)$ the class of functions which have at most linear growth, i.e., $|\phi(x,t)|\leq C(1+|x|)$ for some $C$ and any $(x,t)\in E_T$. Then viscosity solutions of \eqref{eq: VI} are defined as follows:
 Any $v\in C(E_T)$ is a viscosity supersolution (subsolution) of \eqref{eq: VI} if
 \[
 \begin{split}
  & \min\{\partial_t \phi - \L_D \phi - I \phi + r v, v - g\} \geq 0 \,(\leq 0), \quad (x,t)\in \R\times (0,T],\\
  & v(x,0) \geq g(x) \,(\leq g(x)), \hspace{3.7cm} x\in \R,
 \end{split}
 \]
 for any function $\phi\in C^{2,1}(\R\times (0,T)) \cap C_1(E_T)$ such that $v(x,t)=\phi(x,t)$ and $v(\tilde{x}, \tilde{t}) \geq \phi(\tilde{x}, \tilde{t})$ ($v(\tilde{x}, \tilde{t}) \leq \phi(\tilde{x}, \tilde{t})$) for any other point $(\tilde{x},\tilde{t})\in \R\times (0.T)$. The function $v$ is said to be a viscosity solution of \eqref{eq: VI} if it is both supersolution and subsolution.

 Let us show that $v^*$ is a viscosity subsolution of \eqref{eq: VI}. Fix $(x,t)\in \R\times (0,T]$, consider $v^*(x,t)>g(x)$, otherwise $\min\{\partial_t \phi - \L_D \phi - I \phi + r v^*, v^*(x,t) - g(x)\} \leq 0$ is automatically satisfied. Without loss of generality we can assume that $(x,t)$ is the strict maximum of $v^* -\phi$ in a neighborhood $B(x,t; \delta)$, otherwise the test function can be modified appropriately. On the other hand, since $(v^{\epsilon_k})_{k\geq 0}$ converges to $v^*$ uniformly in compact domains, we can find sufficiently small $\epsilon_k$ such that $v^{\epsilon_k} -\phi$ attains its maximum over $B(x,t; \delta)$ at $(x_k, t_k)\in B(x,t; \delta)$. Moreover, $(x_k, t_k) \rightarrow (x,t)$ as $\epsilon_k\rightarrow 0$. Since $v^{\epsilon_k}$ is a classical solution of \eqref{eq:penalty-v-eps} (see Lemma~\ref{lemma:existence-penalty-prob}), it is also a viscosity solution. Hence $(\partial_t - \L_D - I + r)\phi(x_k, t_k) + p_{\epsilon_k}(v^{\epsilon_k}(x_k, t_k) - g^\epsilon(x_k)) \leq 0$. Now, since $v^*(x,t)> g(x)$ and $v^{\epsilon_k}(x_k, t_k) - g(x_k)$ converges to $v^*(x,t) - g(x)$, we obtain $\lim_{\epsilon_k \rightarrow 0} p_{\epsilon_k}(v^{\epsilon_k}(x_k, t_k) - g^\epsilon(x_k)) =0$. As a result, $(\partial_t - \L_D - I + r)\phi(x, t)\leq 0$ by sending $\epsilon_k\to 0$. This confirms that $v^*$ is a viscosity subsolution of \eqref{eq: VI}.

 For the supersolution property, since $v^* \geq g$, it suffices to show that $(\partial_t - \L_D - I + r)\phi(x, t)\geq 0$ for any test function $\phi$. This follows from the similar arguments which we used for the subsolution property in the previous paragraph.

 Define $u^*(x,t) = v^*(x, T-t)$. It is clear that $u^*$ is a viscosity solution of \eqref{eq:varineq-u}. Then the statement follows from Theorem~4.1 in \cite{pham-control}, which states that the value function $u$ is the unique viscosity solution of \eqref{eq:varineq-u} when the L\'{e}vy measure satisfies $\int_{|y|>1} |y|^2 \nu(dy)<\infty$.
\end{proof}

\begin{proof}[Proof of Corollary~\ref{cor: smooth-fit}]
 (i) Combining Theorem~\ref{thm: main} and the Sobolev embedding theorem (c.f. Lemma~3.3 in \cite{lad} pp. 80), we have $u\in H^{\beta, \frac{\beta}{2}}(D\times [0, T-s])$, where $\beta = 2- \frac{3}{p}$ and $s<T$. Choosing $p>3$ so that $\beta>1$, the continuity of $\partial_x u$ follows from Definition~\ref{def:sobolev_holder space}.

 (ii) Let us first show that $Iu$ is well defined and H\"{o}lder continuous. Since $u\in H^{\beta, \frac{\beta}{2}}(D\times [0, T-s])$ (which follows due to (i)), choosing sufficiently large $p$ so that $\beta>\alpha$, $Iu \in H^{\beta-\alpha-\gamma, \frac{\beta-\alpha-\gamma}{2}}(\overline{D_{T-s}})$ by Lemma~\ref{lemma:infinite-var-holder} part (i). Now, for $B\subset \R$ and $t_1, t_2\in [0,T)$ such that $B\times (t_1, t_2) \subset \C$, consider the following boundary value problem:
 \begin{equation}\label{eq: bvp}
 \begin{split}
  & (-\partial_t - \L_D + r) \,v = I u, \quad (x,t) \in B\times[t_1, t_2),\\
  & v(x,t) = u(x,t), \hspace{1.5cm} (x,t)\in \partial B \times [t_1, t_2) \cup \overline{B} \times t_2.
 \end{split}
 \end{equation}
 It is straightforward to show that $u$ is the unique viscosity solution for the previous problem using the fact that $u$ is the unique viscosity solution for \eqref{eq:varineq-u}. On the other hand, since the boundary and terminal values of \eqref{eq: bvp} are continuous and the driving term $Iu$ is H\"{o}lder continuous, it follows from Theorem 9 in \cite{friedman} pp. 69 that \eqref{eq: bvp} has a classical solution $u^*\in C^{2,1}(B\times (t_1, t_2))$. Hence $u = u^*$ on $B\times (t_1, t_2)$, since $u^*$ is also a viscosity solution. Therefore, $u\in C^{2,1}(B\times (t_1, t_2))$. The statement now follows, since $B\times (t_1, t_2)$ is an arbitrary subset of $\C$.
\end{proof}

\bibliographystyle{plain}
%\bibliography{Levy_Regularity_03122012.bbl}

\begin{thebibliography}{10}

\bibitem{ait-jacod}
{\sc Y.~A{\"{i}}t-Sahalia and J.~Jacod}, {\em Estimating the degree of activity
  of jumps in high frequency data}, The Annals of Statistics, 37 (2009),
  pp.~2202--2244.

\bibitem{ait-jacod-Brownian}
\leavevmode\vrule height 2pt depth -1.6pt width 23pt, {\em Is {B}rownian motion
  necessary to model high frequency data?}, The Annals of
  Statistics, 38 (2010), pp.~3093--3128.

\bibitem{MR2330346}
{\sc A.~L. Amadori}, {\em Obstacle problem for nonlinear integro-differential
  equations arising in option pricing}, Ricerche di Matematica, 56 (2007), pp.~1--17.

\bibitem{MR2422079}
{\sc G.~Barles and C.~Imbert}, {\em Second-order elliptic integro-differential
  equations: viscosity solutions' theory revisited}, Annales de l'Institut Henri Poincar\'e. Analyse Non
              Lin\'eaire, 25 (2008), pp.~567--585.

\bibitem{bayraktar-finite-horizon}
{\sc E.~Bayraktar}, {\em A proof of the smoothness of the finite time horizon
  {A}merican put option for jump diffusions}, SIAM Journal on Control and
  Optimization, 48 (2009), pp.~551--572.

\bibitem{bayraktar-xing-SIMA}
{\sc E.~Bayraktar and H.~Xing}, {\em Analysis of the optimal exercise boundary
  of {A}merican options for jump diffusions}, SIAM Journal on Mathematical
  Analysis, 41 (2009), pp.~825--860.

\bibitem{Bensoussan-Lions}
{\sc A.~Bensoussan and J.-L. Lions}, {\em Impulse control and quasivariational
  inequalities}, Gauthier-Villars, Montrouge, 1984.

\bibitem{boy-leven}
{\sc S.~I. Boyarchenko and S.~Z. Levendorski{\u\i}}, {\em Non-{G}aussian
  {M}erton-{B}lack-{S}choles theory}, vol.~9 of Advanced Series on Statistical
  Science \& Applied Probability, World Scientific Publishing Co. Inc., River
  Edge, NJ, 2002.

\bibitem{cgmy}
{\sc P.~Carr, H.~Geman, D.~B. Madan, and M.~Yor}, {\em Stochastic volatility
  for {L}\'evy processes}, Mathematical Finance, 13 (2003), pp.~345--382.

\bibitem{cont}
{\sc R.~Cont and P.~Tankov}, {\em Financial modelling with jump processes},
  Chapman \& Hall/CRC Financial Mathematics Series, Chapman \& Hall/CRC, Boca
  Raton, FL, 2004.

\bibitem{guoliang10}
{\sc M.~Davis, X.~Guo, and G.~Wu}, {\em Impulse control of multidimensional
  jump diffusions}, SIAM Journal on Control and Optimization,  48 (2010), pp.~5276--5293.

\bibitem{evans}
{\sc L.~C. Evans}, {\em Partial differential equations}, vol.~19 of Graduate
  Studies in Mathematics, American Mathematical Society, Providence, RI, 1998.

\bibitem{friedman}
{\sc A.~Friedman}, {\em Partial differential equations of parabolic type},
  Prentice-Hall Inc., Englewood Cliffs, N.J., 1964.

\bibitem{friedman-freeboundary}
\leavevmode\vrule height 2pt depth -1.6pt width 23pt, {\em Variational
  Principles and Free-boundary Problems}, John Wiley \& Sons Inc., New York,
  1982.

\bibitem{garr-mena}
{\sc M.~G. Garroni and J.-L. Menaldi}, {\em Green functions for second order
  parabolic integro-differential problems}, vol.~275 of Pitman Research Notes
  in Mathematics Series, Longman Scientific \& Technical, Harlow, 1992.

\bibitem{Gihman-Skorohod}
{\sc I. Gihman and A.~V. Skorohod}, {\em Stochastic Differential Equations}, Springer Berlag, Berlin, 1972.

\bibitem{MR2486085}
{\sc X.~Guo and G.~Wu}, {\em Smooth fit principle for impulse control of
  multidimensional diffusion processes}, SIAM Journal on Control and Optimization, 48 (2009),
  pp.~594--617.

\bibitem{lad}
{\sc O.~A. Lady{\v{z}}enskaja, V.~A. Solonnikov, and N.~N. Uralchva}, {\em
  Linear and Quasi-linear Equations of Parabolic Type}, American Mathematical
  Society, Providence, Rhode Island, 1968.

\bibitem{lamberton}
{\sc D.~Lamberton and M.~Mikou}, {\em The critical price for the {A}merican put
  in an exponential {L}\'{e}vy model}, Finance and Stochastics, 12 (2008),
  pp.~561--581.

\bibitem{MR680451}
{\sc S.~Lenhart}, {\em Integro-differential operators associated with diffusion
  processes with jumps}, Applied Mathematics and Optimization, 9 (1982/83), pp.~177--191.

\bibitem{Mikulevicius-Zhang}
{\sc R.~Mikulevi\v{c}ius and C.~Zhang}, {\em On the rate of convergence of weak {E}uler approximation for nondegenerate {SDE}s driven by L\'{e}vy processes}, Stochastic Processes and their Applications, 121 (2011), pp.~1720-1748.

\bibitem{pham}
{\sc H.~Pham}, {\em Optimal stopping, free boundary, and {A}merican option in a
  jump-diffusion model}, Applied Mathematics and Optimization, 35 (1997),
  pp.~145--164.

\bibitem{pham-control}
\leavevmode\vrule height 2pt depth -1.6pt width 23pt, {\em Optimal stopping of
  controlled jump diffusion processes: a viscosity solution approach}, Journal
  of Mathematical Systems, Estimation, and Control, 8 (1998), pp.~1--27.

\bibitem{sato}
{\sc K.~Sato}, {\em L\'{e}vy processes and infinitely divisible distributions},
  vol.~68 of Cambridge Studies in Advanced Mathematics, Cambridge University
  Press, Cambridge, 1999.

\bibitem{silvestre}
{\sc L.~Silvestre}, {\em Regularity of the obstacle problem for a fractional
  power of the laplace operator}, Communications on Pure and Applied
  Mathematics, 60 (2006), pp.~67--112.

\bibitem{yang}
{\sc C.~Yang, L.~Jiang, and B.~Bian}, {\em Free boundary and {A}merican options
  in a jump-diffusion model}, European Journal of Applied Mathematics, 17
  (2006), pp.~95--127.

\bibitem{zhang}
{\sc X.~L. Zhang}, {\em M\'{e}thodes Num\'{e}riques pour le Calcul des Options
  {A}m\'{e}ricaine dans des Mod\`{e}les de diffusion avec sauts}, PhD thesis,
  l'{E}cole Nationale des Ponts et Chauss\'{e}es, Paris, 1994.

\end{thebibliography}

\end{document}